\documentclass[12 pt]{article}%
\usepackage{amsmath, amsfonts, amsthm, color,latexsym}
\usepackage{amsmath}
\usepackage{amsfonts}
\usepackage{amssymb}
\usepackage{color, soul}
\usepackage{enumerate}
\providecommand{\U}[1]{\protect\rule{.1in}{.1in}}
\newtheorem{theorem}{Theorem}[section]
\newtheorem{proposition}[theorem]{Proposition}
\newtheorem{corollary}[theorem]{Corollary}
\newtheorem{example}[theorem]{Example}

\newtheorem{remark}[theorem]{Remark}

\newtheorem{final remark}[theorem]{Final Remark}
\newtheorem{definition}[theorem]{Definition}
\allowdisplaybreaks[4]

\begin{document}
	
	%\title{\sc Weak and Weak* Dunford Pettis Multilinear Operators}
	\title{\sc Dunford-Pettis Multilinear Operators and their variations: A revisit to the classic concepts of Operator Ideals.}
	\author{Joilson Ribeiro,%\thanks{joilsonor@ufba.br}
		~ Fabr\'icio Santos \thanks{
			\newline\indent2010 Mathematics Subject
			Classification: 46B45, 46G25, 47H60, 47L22.\newline\indent Key words: Banach sequence spaces, Multipolynomias, Absolutely summing multipolynomials.}}
	\date{}
	\maketitle

	\begin{abstract}
		In this paper, we address broader concepts of Dunford-Pettis operators, presenting new classes and results that relate this class to others already well studied in the literature, as well as an approach outside the origin. We also investigate inclusion results and conditions for the coincidence of this class with previously studied classes and with new classes that arise throughout this work.
	\end{abstract}
	
	\section{Introduction and background}

	The class of Dunford–Pettis linear operators has played a crucial role in the theory of Banach spaces in general, as highlighted in \cite{DU77}. Several researchers have investigated this topic, mainly seeking correlations among the classes of compact linear operators, Dunford–Pettis operators, weakly Dunford–Pettis operators, and weakly* Dunford–Pettis operators, as can be seen in \cite{AB82, BHJ22, BKAB24}.
	
	A first multilinear approach to Dunford-Pettis operators between Banach spaces was proposed by Ribeiro, Santos, and Torres in \cite{RST22}, where it was shown that this class is an ideal of multilinear mappings but not a hyper-ideal, a concept introduced by Botelho and Torres in \cite{BT15}. In that work, Ribeiro, Santos, and Torres focused on verifying that this class does not satisfy the coherence concept introduced by Pellegrino and Ribeiro in \cite{PR14}. However, they did not explore other generalizations to the multilinear setting to study inclusion and coincidence results. It is worth noting that the case we will call Dunford-Pettis multilinear operators at every point already appears in the literature and can be found in \cite{AD02, BR98}.
	 
	In this paper, in addition to introducing new classes that generalize the class of Dunford-Pettis linear operators, we also revisit concepts such as multi-ideals, hyper-ideals, and coherence and compatibility to formalize these new structures into properties considered fundamental for their development, such as closure under composition with linear operators and the product of linear functionals and multilinear forms. We also present coincidence conditions for these classes and seek characterizations that make their use smoother.

	Let $E$ and $F$ be Banach spaces. An operator $T\colon E \rightarrow F$ is called a Dunford–Pettis operator, and is denoted by $\mathcal{DP}$, if $T$ carries weakly convergent sequences to norm convergent sequences. In other words, if $x_j \to 0$ as $j \to \infty$ in $\sigma(E,E')$, then $\|T(x_j)\|_F \to 0$ as $j \to \infty$.
	
	Alternatively, a (bounded linear) operator $T\colon E \rightarrow F$ is a Dunford–Pettis operator if and only if $T$ carries relatively weakly compact sets to norm totally bounded sets. A Banach space $E$ is said to have the Schur property if $Id_E\colon E \rightarrow E$, given by $Id_E(x)=x$, is a Dunford–Pettis operator. The Banach space $\ell_1$ has the Schur property (\cite[Theorem 2.5.18]{MN91}).
	
	We can also define the class of weakly Dunford–Pettis linear operators as follows: Let $E$ and $F$ be Banach spaces. An operator $T\colon E \rightarrow F$ is said to be weakly Dunford–Pettis if whenever $x_j \to 0$ in $\sigma(E,E')$ and $f_j \to 0$ in $F'$, then $f_j(T(x_j)) \to 0$. We denote this class by $\mathcal{WDP}$ (\cite{AB85}).
	
	From now on, the letters $E,E_{1},\dots,E_{m},F,G,H$ will denote Banach spaces over the same scalar field $\mathbb{K}=\mathbb{R}$ or $\mathbb{C}$, and $E'$ stands for the topological dual of $E$. The symbol $B_E$ denotes the closed unit ball of $E$. We use $\mathrm{BAN}$ to denote the class of all Banach spaces over $\mathbb{K}$.
	
	By $\mathcal{L}(E_1, \ldots, E_n;F)$ we denote the Banach space of continuous $n$-linear operators from $E_1 \times \cdots \times E_n$ to $F$, endowed with the usual uniform norm $\|\cdot\|$. In the linear case we write $\mathcal{L}(E;F)$. If $E_1 = \cdots = E_n = E$, we write $\mathcal{L}(^nE;F)$. If $F = \mathbb{K}$, we write $\mathcal{L}(E_1, \ldots, E_n)$ and $\mathcal{L}(^nE)$.
	
	A mapping $P \colon E \rightarrow F$ is said to be an {\it $m$-homogeneous polynomial} if there exists $A \in \mathcal{L}(^mE; F)$ such that $P(x) = A(x)^m$ for every $x \in E$, where $A(x)^m := A\left(x,\overset{m}{\dots},x\right)$. In this case we write $P = \widehat{A}$. By $\mathcal{P}(^mE;F)$ we denote the Banach space of all continuous $m$-homogeneous polynomials from $E$ to $F$, endowed with the usual supremum norm $\|\cdot\|$. By $\check{P}$ we denote the unique symmetric continuous $m$-linear operator associated with $P$.
	
	We write $x_j \overset{w}{\to} x$ in $E$ and $f_j \overset{w^*}{\to} f$ in $E'$ to mean that $x_j \to x$ in $\sigma(E,E')$ and $f_j \to f$ in $\sigma(E', E)$, as $j \to \infty$.

\section{Dunford-Pettis Multilinear Operators and Homogeneous Polynomials Everywhere}\label{DPMOHPEv}

In this section we present the concept of Dunford–Pettis multilinear operators and Dunford–Pettis homogeneous polynomials at every point. Recall that the concept of Dunford–Pettis multilinear operators and homogeneous polynomials (at the origin) was introduced by Ribeiro, Santos, and Torres in \cite{RST22}. In that paper it was shown that the coherence condition (CH1) fails, and that this class does not generate a hyper-ideal of multilinear mappings or homogeneous polynomials, concepts introduced in \cite{BT15} and \cite{BT18}. We begin by defining these notions.

\begin{definition}\label{DPML}
	Let $m \in \mathbb{N}$, let $E_1,\dots, E_m$ and $F$ be Banach spaces, and let $T \in \mathcal{L}(E_1,\dots, E_m; F)$. We say that $T$ is a Dunford–Pettis $m$-linear operator at $(a_1,\dots, a_m) \in E_1 \times \cdots \times E_m$ if, for each $i=1,\dots,m$, every sequence $(x_j^{(i)})_{j=1}^\infty \subset E_i$ satisfying
	\[
	x_j^{(i)} \overset{w}{\longrightarrow} a_i
	\]
	also satisfies
	\[
	T(x_j^{(1)},\dots,x_j^{(m)}) \longrightarrow T(a_1,\dots,a_m)
	\]
	in norm as $j \to \infty$.
\end{definition}

If $T$ is a Dunford–Pettis multilinear operator at every point of $E_1 \times \cdots \times E_m$, we say that $T$ is Dunford–Pettis at every point. We denote by $\mathcal{L_{DP}^{\mathrm{ev}}}$ the class of all such operators. Analogously, we have the following definition.

\begin{definition}\label{DPPH}
	Let $m \in \mathbb{N}$, let $E$ and $F$ be Banach spaces, and let $P \in \mathcal{P}(^mE; F)$. We say that $P$ is a Dunford–Pettis $m$-homogeneous polynomial at $a \in E$ if, for every sequence $(x_j)_{j=1}^{\infty} \subset E$ with
	\begin{equation*}
		x_j \overset{w}{\longrightarrow} a,
	\end{equation*}
	we have
	\begin{equation*}
		P(x_j) \longrightarrow P(a)
	\end{equation*}
	in norm as $j \to \infty$.
\end{definition}

If $P$ is a Dunford–Pettis homogeneous polynomial at every point of $E$, we say that $P$ is Dunford–Pettis at every point. We denote by $\mathcal{P_{DP}^{\mathrm{ev}}}$ the class of all such polynomials.

\begin{proposition}
	Let $E_1,\dots, E_m$ and $F$ be Banach spaces and $T \in \mathcal{L}(E_1,\dots, E_m; F)$. Then, $T \in \mathcal{L_{DP}^{\text{ev}}}(E_1,\dots, E_m; F)$ if, and only if, for every $(a_1,\dots, a_m) \in E_1\times\cdots\times E_m$ and every weakly null sequences $(x_j^{(i)})_{j=1}^{\infty}$ in $E_i$, $i=1,\dots, m$, we have
	\begin{equation*}
		\|T(x_j^{(1)} + a_1,\dots, x_j^{(m)} + a_m) - T(a_1,\dots, a_m)\|\longrightarrow 0.
	\end{equation*}
\end{proposition}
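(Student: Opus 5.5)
This is essentially a reparametrization of Definition \ref{DPML}. The plan is to use the elementary fact that, in any Banach space $E_i$, a sequence $(y_j)$ converges weakly to $a_i$ if and only if $(y_j - a_i)$ is weakly null. This holds because each $\varphi \in E_i'$ is linear, so $\varphi(y_j) \to \varphi(a_i)$ exactly when $\varphi(y_j - a_i) \to 0$. Both implications then come from substituting $x_j^{(i)} = y_j^{(i)} - a_i$, or conversely $y_j^{(i)} = x_j^{(i)} + a_i$.

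For the direct implication, assume $T \in \mathcal{L_{DP}^{\mathrm{ev}}}(E_1,\dots,E_m;F)$. Fix $(a_1,\dots,a_m)$ and weakly null sequences $(x_j^{(i)})_{j=1}^\infty$, and set $y_j^{(i)} := x_j^{(i)} + a_i$. For every $\varphi \in E_i'$ we have $\varphi(y_j^{(i)}) = \varphi(x_j^{(i)}) + \varphi(a_i) \to \varphi(a_i)$, so $y_j^{(i)} \overset{w}{\to} a_i$ for each $i$. Since $T$ is Dunford--Pettis at $(a_1,\dots,a_m)$, Definition \ref{DPML} gives $T(y_j^{(1)},\dots,y_j^{(m)}) \to T(a_1,\dots,a_m)$ in norm, which is precisely the displayed condition.

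For the converse, assume the displayed condition. Fix a point $(a_1,\dots,a_m)$ and sequences with $y_j^{(i)} \overset{w}{\to} a_i$. Then $x_j^{(i)} := y_j^{(i)} - a_i$ is weakly null, again by linearity of the functionals. Applying the hypothesis to these $x_j^{(i)}$ gives
\[
\|T(y_j^{(1)},\dots,y_j^{(m)}) - T(a_1,\dots,a_m)\| = \|T(x_j^{(1)}+a_1,\dots,x_j^{(m)}+a_m) - T(a_1,\dots,a_m)\| \to 0.
\]
So $T$ is Dunford--Pettis at $(a_1,\dots,a_m)$, and since the point was arbitrary, $T \in \mathcal{L_{DP}^{\mathrm{ev}}}$.

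There is no genuine obstacle here. The only point requiring care is that the quantifiers match: the sequences $x_j^{(i)}$ in the proposition are arbitrary and independent across $i$, exactly as in Definition \ref{DPML}, and the point $(a_1,\dots,a_m)$ ranges over the whole product in both formulations. Multilinearity of $T$ is not needed, since only translation in each variable is used and $T$ is never expanded.
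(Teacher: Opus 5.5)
Your proposal is correct and matches the paper's proof: in each coordinate you translate by $a_i$, using that $x_j^{(i)} \overset{w}{\to} a_i$ if and only if $x_j^{(i)} - a_i \overset{w}{\to} 0$, and then apply Definition \ref{DPML} in one direction and the hypothesis in the other. You are also right that the multilinearity of $T$ plays no role in this argument.
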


\begin{proof}
	Suppose first that $T \in \mathcal{L_{DP}^{\text{ev}}}(E_1,\dots, E_m; F)$. 
	
	Let $(a_1,\dots,a_m)\in E_1\times\cdots\times E_m$ and let $(x_j^{(i)})$ be weakly null sequences in $E_i$. Then
	\[
	x_j^{(i)} + a_i \overset{w}{\longrightarrow} a_i,
	\]
	for all $i=1,\dots,m$. Since $T$ is Dunford-Pettis at every point, it follows that
	\[
	\|T(x_j^{(1)} + a_1,\dots, x_j^{(m)} + a_m) - T(a_1,\dots, a_m)\| \to 0.
	\]
	
	Conversely, suppose the condition holds. Let $(x_j^{(i)}) \subset E_i$ be such that $x_j^{(i)} \overset{w}{\longrightarrow} a_i$. Define
	\[
	y_j^{(i)} = x_j^{(i)} - a_i,
	\]
	so that $y_j^{(i)} \overset{w}{\longrightarrow} 0$. Then
	\begin{align*}
		&\|T(x_j^{(1)},\dots, x_j^{(m)}) - T(a_1,\dots, a_m)\| \\
		&= \|T(y_j^{(1)} + a_1,\dots, y_j^{(m)} + a_m) - T(a_1,\dots, a_m)\|.
	\end{align*}
	By hypothesis, the right-hand side converges to $0$, and hence
	\[
	T(x_j^{(1)},\dots, x_j^{(m)}) \to T(a_1,\dots, a_m).
	\]
	Therefore, $T \in \mathcal{L_{DP}^{\text{ev}}}(E_1,\dots, E_m; F)$.
\end{proof}

\begin{remark}\label{DPevcontDP}
	\begin{itemize}
		\item[(1)] The concepts in Definitions \ref{DPML} and \ref{DPPH} are not new (\cite{BT15, BT18}). They coincide with the classes of weakly sequentially continuous multilinear operators and weakly sequentially continuous homogeneous polynomials, respectively. We adopt this terminology for alignment with the class of Dunford–Pettis multilinear mappings.
		
		\item[(2)] It is immediate that $\mathcal{L_{DP}^{\mathrm{ev}}} \subset \mathcal{L_{DP}}$ and $\mathcal{P_{DP}^{\mathrm{ev}}} \subset \mathcal{P_{DP}}$.
	\end{itemize}
\end{remark}

%\begin{remark}\label{DPevcontDP}
%	É imediato perceber que $\mathcal{L_{DP}^{\text{ev}}} \subset \mathcal{L_{DP}}$ e também que $\mathcal{P_{DP}^{\text{ev}}} \subset \mathcal{P_{DP}}$.
%\end{remark}

A natural question at this point is whether $\mathcal{L_{DP}^{\mathrm{ev}}}$ is a Banach multi-ideal. To address this question, we first show that $\mathcal{L_{DP}^{\mathrm{ev}}}(E_1,\dots,E_m;F)$ is a Banach space for arbitrary Banach spaces $E_1,\dots,E_m$ and $F$.

\begin{proposition}
	Let $m \in \mathbb{N}$, let $E_1,\dots,E_m$ and $F$ be Banach spaces. Then $\mathcal{L_{DP}^{\mathrm{ev}}}(E_1,\dots,E_m;F)$ is a closed subset of $\mathcal{L_{DP}}(E_1,\dots,E_m;F)$.
\end{proposition}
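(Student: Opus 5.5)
The plan is the standard $\varepsilon/3$ argument: a uniform limit of weakly sequentially continuous multilinear maps, taken along weakly convergent sequences, is again weakly sequentially continuous. By Remark \ref{DPevcontDP}(2) we already have $\mathcal{L_{DP}^{\mathrm{ev}}}(E_1,\dots,E_m;F)\subset \mathcal{L_{DP}}(E_1,\dots,E_m;F)$, and the latter sits inside $\mathcal{L}(E_1,\dots,E_m;F)$ with the uniform norm. It therefore suffices to take a sequence $(T_n)\subset \mathcal{L_{DP}^{\mathrm{ev}}}(E_1,\dots,E_m;F)$ with $\|T_n-T\|\to 0$ for some $T\in\mathcal{L}(E_1,\dots,E_m;F)$ and show that $T$ is Dunford–Pettis at every point.

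Fix $(a_1,\dots,a_m)\in E_1\times\cdots\times E_m$ and sequences $x_j^{(i)}\overset{w}{\to} a_i$. Weakly convergent sequences are bounded (uniform boundedness principle), so there is $M>0$ with $\|x_j^{(i)}\|\le M$ and $\|a_i\|\le M$ for all $i,j$. For each $n$ and $j$ I will write
\begin{align*}
\|T(x_j^{(1)},\dots,x_j^{(m)})-T(a_1,\dots,a_m)\|
&\le \|(T-T_n)(x_j^{(1)},\dots,x_j^{(m)})\| \\
&\quad+\|T_n(x_j^{(1)},\dots,x_j^{(m)})-T_n(a_1,\dots,a_m)\| \\
&\quad+\|(T_n-T)(a_1,\dots,a_m)\|.
\end{align*}
The first and third terms are each bounded by $\|T-T_n\|M^m$, uniformly in $j$. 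Given $\varepsilon>0$, choose $n$ with $\|T-T_n\|M^m<\varepsilon/3$. Because $T_n$ is Dunford–Pettis at $(a_1,\dots,a_m)$, the middle term is below $\varepsilon/3$ for all large $j$. Hence $T(x_j^{(1)},\dots,x_j^{(m)})\to T(a_1,\dots,a_m)$ in norm, so $T\in\mathcal{L_{DP}^{\mathrm{ev}}}$.

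The only delicate point is the uniform boundedness of the weakly convergent sequences. It is what makes the approximation error independent of $j$, and it follows from the Banach–Steinhaus theorem applied to the sequences regarded as elements of the bidual. The rest is routine.

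The statement asks for closedness in $\mathcal{L_{DP}}(E_1,\dots,E_m;F)$. I will take that space with the norm it carries from $\mathcal{L}$, the usual sup norm, under which it is closed in \cite{RST22}. If a different (larger) ideal norm were intended, convergence in that norm would imply convergence in the sup norm, so the same argument applies. Consequently $\mathcal{L_{DP}^{\mathrm{ev}}}(E_1,\dots,E_m;F)$ is closed, and hence Banach.
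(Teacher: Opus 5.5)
Your proof is correct and is essentially the paper's own argument. Both split $\|T(x_j^{(1)},\dots,x_j^{(m)})-T(a_1,\dots,a_m)\|$ into the same three terms, use that weakly convergent sequences are bounded, and approximate $T$ by some $T_n$ in the uniform norm. Your $\varepsilon/3$ bookkeeping is slightly more careful than the paper's, which only asserts that the right-hand side tends to $0$.
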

\begin{proof}
	Let $(T_j)_{j=1}^{\infty} \subset \mathcal{L_{DP}^{\mathrm{ev}}}(E_1,\dots,E_m;F)$ with $T_j \to T$ in norm. Let $(x_j^{(i)})_{j=1}^{\infty} \subset E_i$ be such that $x_j^{(i)} \overset{w}{\to} x_i$, $i=1,\dots,m$. Then $(x_j^{(i)})$ are bounded, so
	\[
	\|(x_j^{(1)},\dots,x_j^{(m)})\| := \|x_j^{(1)}\|\cdots\|x_j^{(m)}\| \le C
	\]
	for some $C>0$ and all $j$.
	
	Given $\varepsilon>0$, choose $k$ such that $\|T-T_k\|<\varepsilon$. Since $T_k \in \mathcal{L_{DP}^{\mathrm{ev}}}$,
	\[
	T_k(x_j^{(1)},\dots,x_j^{(m)}) \to T_k(x_1,\dots,x_m).
	\]
	Hence,
	\begin{align*}
		&\|T(x_j^{(1)},\dots,x_j^{(m)}) - T(x_1,\dots,x_m)\| \\
		&\le C\|T-T_k\| + \|T_k(x_j^{(1)},\dots,x_j^{(m)}) - T_k(x_1,\dots,x_m)\| + \|T-T_k\|\|(x_1,\dots,x_m)\|,
	\end{align*}
	which tends to $0$ as $j \to \infty$. Therefore,
	\[
	T(x_j^{(1)},\dots,x_j^{(m)}) \to T(x_1,\dots,x_m),
	\]
	and $T \in \mathcal{L_{DP}^{\mathrm{ev}}}(E_1,\dots,E_m;F)$.
\end{proof}

As an immediate consequence of this proposition, and since $\mathcal{L_{DP}}(E_1,\dots,E_m;F)$ is a Banach space, we obtain the following result.

\begin{corollary}
	Let $m \in \mathbb{N}$, let $E_1,\dots,E_m$ and $F$ be Banach spaces. Then $\mathcal{L_{DP}^{\mathrm{ev}}}(E_1,\dots,E_m;F)$ is a Banach space.
\end{corollary}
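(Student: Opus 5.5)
The corollary follows by combining the preceding proposition with the standard fact that a closed subset of a Banach space is complete. First I will check that $\mathcal{L_{DP}^{\mathrm{ev}}}(E_1,\dots,E_m;F)$ is a linear subspace of $\mathcal{L}(E_1,\dots,E_m;F)$. Let $S,T \in \mathcal{L_{DP}^{\mathrm{ev}}}(E_1,\dots,E_m;F)$ and $\lambda \in \mathbb{K}$, and let $x_j^{(i)} \overset{w}{\to} a_i$ for $i=1,\dots,m$. Then
\[
(S+\lambda T)(x_j^{(1)},\dots,x_j^{(m)}) - (S+\lambda T)(a_1,\dots,a_m)
\]
splits into the two differences for $S$ and for $T$. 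Each of these tends to $0$ in norm, so the whole expression does, and the zero operator is trivially in the class. The space therefore inherits the uniform norm $\|\cdot\|$ as a normed space.

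Next I will use that $\mathcal{L_{DP}}(E_1,\dots,E_m;F)$ is a Banach space under the uniform norm. The paper takes this for granted from \cite{RST22}. If needed, it follows by the same $\varepsilon/3$ argument as in the preceding proposition, applied only at the origin, so that it is a closed subspace of the complete space $\mathcal{L}(E_1,\dots,E_m;F)$.

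To conclude, let $(T_n)$ be a Cauchy sequence in $\mathcal{L_{DP}^{\mathrm{ev}}}(E_1,\dots,E_m;F)$. It is Cauchy in $\mathcal{L_{DP}}(E_1,\dots,E_m;F)$ and hence converges there to some $T$. The preceding proposition says $\mathcal{L_{DP}^{\mathrm{ev}}}(E_1,\dots,E_m;F)$ is closed in $\mathcal{L_{DP}}(E_1,\dots,E_m;F)$, so $T \in \mathcal{L_{DP}^{\mathrm{ev}}}(E_1,\dots,E_m;F)$. This proves completeness.

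There is no real obstacle here. The only points needing care are the linear-subspace verification and the fact that both spaces carry the same uniform norm, so that closedness in the proposition refers to the right topology.
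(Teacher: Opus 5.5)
Your proposal is correct and follows the paper's route: the paper also derives the corollary directly from the preceding proposition (closedness of $\mathcal{L_{DP}^{\mathrm{ev}}}(E_1,\dots,E_m;F)$ in $\mathcal{L_{DP}}(E_1,\dots,E_m;F)$) together with the completeness of $\mathcal{L_{DP}}(E_1,\dots,E_m;F)$. Your check that $\mathcal{L_{DP}^{\mathrm{ev}}}$ is a linear subspace, and your sketch of why $\mathcal{L_{DP}}$ is complete, fill in details the paper leaves implicit.
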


It is straightforward to verify that the pairs $(\mathcal{L_{DP}^{\mathrm{ev}}}, \|\cdot\|)$ and $(\mathcal{P_{DP}^{\mathrm{ev}}}, \|\cdot\|)$ form a Banach multi-ideal and a Banach ideal of homogeneous polynomials, respectively (see \cite{FG03, P83} for details). These structures generalize the Banach ideal of linear operators $(\mathcal{L_{DP}}, \|\cdot\|)$. On the other hand, $(\mathcal{L_{DP}^{\mathrm{ev}}}, \|\cdot\|)$ is not a hyper-ideal of multilinear mappings, as shown in Examples 4.4 and 3.11 of \cite{BT15} and \cite{BT18}, respectively.

We next show that the sequence
\begin{equation*}
	\left(\mathcal{L_{DP}^{\mathrm{ev,n}}}, \mathcal{P_{DP}^{\mathrm{ev,n}}} \right)_{n=1}^{\infty}
\end{equation*}
is coherent and compatible with $(\mathcal{L_{DP}}, \|\cdot\|)$, according to \cite{PR14}. However, it is not strongly coherent (see \cite{RST22}). We begin by establishing the following result.

Before proving the next result, we recall a standard notion in multilinear theory. Let $S_m$ denote the group of permutations of $\{1,\dots,m\}$. Given $T \in \mathcal{L}(^mE;F)$ and $\sigma \in S_m$, define $T_{\sigma}$ and $T_S$ in $\mathcal{L}(^mE;F)$ by
\begin{align*}
	T_{\sigma}(x_1,\dots,x_m) &= T(x_{\sigma(1)},\dots,x_{\sigma(m)}),\\
	T_S(x_1,\dots,x_m) &= \frac{1}{m!}\sum_{\sigma \in S_m} T_{\sigma}(x_1,\dots,x_m).
\end{align*}

A subclass $\mathcal{G}$ of the class of continuous multilinear operators between Banach spaces is said to be symmetric if $T_S \in \mathcal{G}(^mE;F)$ whenever $T \in \mathcal{G}(^mE;F)$. If $\mathcal{G}$ is endowed with a function $\|\cdot\|_{\mathcal{G}}\colon \mathcal{G} \to [0,\infty)$, then $\mathcal{G}$ is said to be strongly symmetric if $T_{\sigma} \in \mathcal{G}$ and $\|T_{\sigma}\|_{\mathcal{G}} = \|T\|_{\mathcal{G}}$ for all $T \in \mathcal{G}(^mE;F)$ and $\sigma \in S_m$ (see \cite{FG03}).

\begin{proposition}\label{DPS}
	The subclass $\mathcal{L_{DP}^{\mathrm{ev}}}$ of the class of continuous multilinear operators between Banach spaces is symmetric.
\end{proposition}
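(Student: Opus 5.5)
We need to show that if $T \in \mathcal{L_{DP}^{\mathrm{ev}}}(^mE;F)$, then $T_S \in \mathcal{L_{DP}^{\mathrm{ev}}}(^mE;F)$. The plan has two steps. First we show that each permuted operator $T_\sigma$ is Dunford--Pettis at every point. Then we use that $\mathcal{L_{DP}^{\mathrm{ev}}}(^mE;F)$ is a linear subspace, so the average $T_S=\frac{1}{m!}\sum_{\sigma\in S_m}T_\sigma$ stays in the class.

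For the first step, fix $\sigma\in S_m$ and a point $(a_1,\dots,a_m)\in E^m$. Take sequences $(x_j^{(i)})_{j=1}^\infty\subset E$ with $x_j^{(i)}\overset{w}{\to}a_i$ for each $i=1,\dots,m$. Set $y_j^{(k)}:=x_j^{(\sigma(k))}$ and $b_k:=a_{\sigma(k)}$. Then for each $k$ we have $y_j^{(k)}\overset{w}{\to}b_k$, and by definition
\[
T_\sigma(x_j^{(1)},\dots,x_j^{(m)})=T(y_j^{(1)},\dots,y_j^{(m)}).
\]
Since $T$ is Dunford--Pettis at the point $(b_1,\dots,b_m)$ (Definition \ref{DPML}), this converges in norm to $T(b_1,\dots,b_m)=T_\sigma(a_1,\dots,a_m)$. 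Hence $T_\sigma\in\mathcal{L_{DP}^{\mathrm{ev}}}(^mE;F)$. This step works precisely because the definition quantifies over \emph{every} point and \emph{every} coordinatewise weakly convergent family. A permutation only relabels the point and the sequences.

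For the second step, let $S_1,S_2\in\mathcal{L_{DP}^{\mathrm{ev}}}(^mE;F)$ and $\lambda\in\mathbb{K}$. For weakly convergent sequences as above, the triangle inequality gives
\[
\|(S_1+\lambda S_2)(x_j^{(1)},\dots,x_j^{(m)})-(S_1+\lambda S_2)(a_1,\dots,a_m)\|\le \|S_1(\cdots)-S_1(a)\|+|\lambda|\,\|S_2(\cdots)-S_2(a)\|\to0.
\]
Applying this finitely many times shows that the finite sum $\frac{1}{m!}\sum_\sigma T_\sigma$ belongs to the class. This is also contained in the Banach multi-ideal assertion stated before the proposition, but the direct check is immediate. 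Therefore $T_S\in\mathcal{L_{DP}^{\mathrm{ev}}}(^mE;F)$.

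No step here is a genuine obstacle. The only point needing care is the index bookkeeping in the first step. The hypothesis gives each coordinate its own weak limit $a_i$, so the permuted point $(a_{\sigma(1)},\dots,a_{\sigma(m)})$ must be used as the point at which the everywhere-Dunford--Pettis property of $T$ is applied. The same argument shows that $\|T_\sigma\|=\|T\|$ for the uniform norm, so the class is in fact strongly symmetric.
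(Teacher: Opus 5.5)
Your proof is correct and takes essentially the same route as the paper: you relabel the sequences and the limit point to see that each $T_\sigma$ is Dunford--Pettis at every point, then pass to the average $T_S$ by linearity, which you justify explicitly where the paper simply asserts it. Your closing remark that $\|T_\sigma\|=\|T\|$ (hence strong symmetry) is also true, though it comes from permuting coordinates mapping $B_E\times\cdots\times B_E$ onto itself rather than from the same argument.
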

 
\begin{proof}
	Let $E_1,\dots,E_m$ and $F$ be Banach spaces, let $\sigma \in S_m$, and let $T \in \mathcal{L_{DP}^{\mathrm{ev}}}(E_1,\dots,E_m;F)$. Let $(x_j^{(i)})_{j=1}^{\infty} \subset E_i$ be sequences such that
	\begin{equation*}
		x_j^{(i)} \overset{w}{\longrightarrow} x_i \in E_i, \quad i=1,\dots,m.
	\end{equation*}
	Then
	\begin{equation*}
		T(x_j^{(1)},\dots,x_j^{(m)}) \longrightarrow T(x_1,\dots,x_m)
	\end{equation*}
	in norm as $j \to \infty$.
	
	Since $x_j^{(\sigma(i))} \overset{w}{\to} x_{\sigma(i)}$ for $i=1,\dots,m$, it follows that
	\begin{equation*}
		\|T_{\sigma}(x_j^{(1)},\dots,x_j^{(m)}) - T_{\sigma}(x_1,\dots,x_m)\|
		= \|T(x_j^{(\sigma(1))},\dots,x_j^{(\sigma(m))}) - T(x_{\sigma(1)},\dots,x_{\sigma(m)})\| \to 0
	\end{equation*}
	as $j \to \infty$. Hence,
	\begin{equation*}
		\|T_S(x_j^{(1)},\dots,x_j^{(m)}) - T_S(x_1,\dots,x_m)\| \to 0
	\end{equation*}
	as $j \to \infty$. Therefore, $T_S \in \mathcal{L_{DP}^{\mathrm{ev}}}(E_1,\dots,E_m;F)$.
\end{proof}

\begin{theorem}\label{DPcoercomp}
 The sequence $\left(\mathcal{L_{DP}^{\mathrm{ev,n}}}, \mathcal{P_{DP}^{\mathrm{ev,n}}} \right)_{n=1}^{\infty}$ is coherent and compatible with $\mathcal{DP}$.
\end{theorem}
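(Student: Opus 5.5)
Following \cite{PR14}, we must check conditions (CH1)--(CH5) for coherence, and for compatibility with $\mathcal{DP}$ conditions (CP1)--(CP5). Since all the ideals involved carry the usual sup norm, the norm inequalities in these conditions reduce to the classical identities and estimates for $\|\cdot\|$. These include $\|T_{a_j}\|\le\|T\|\|a_j\|$, $\|\varphi T\|=\|\varphi\|\|T\|$, $\|P\|\le\|\check P\|\le \frac{n^n}{n!}\|P\|$, and the corresponding polynomial versions. So the work is in the membership statements, each of which we verify by sequential weak-to-norm continuity.

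For (CH1)/(CP2) we fix $T\in\mathcal{L_{DP}^{\mathrm{ev}}}(E_1,\dots,E_{n+1};F)$ and $a_j\in E_j$, and show that the $n$-linear map $T_{a_j}$, obtained by freezing the $j$-th variable at $a_j$, is Dunford–Pettis at every point. Given $x_k^{(i)}\overset{w}{\to} x_i$ for $i\neq j$, we use the constant sequence $a_j\overset{w}{\to}a_j$ in the $j$-th slot and apply the definition directly. The polynomial version (CH2) goes the same way: $P_a(x)=\check P(a,x,\dots,x)$, and $\check P\in\mathcal{L_{DP}^{\mathrm{ev}}}$ by (CH3). For (CH3) we must show $P\in\mathcal{P_{DP}^{\mathrm{ev}}}\iff\check P\in\mathcal{L_{DP}^{\mathrm{ev}}}$. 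One direction is trivial: take the diagonal sequence $x_k\overset{w}{\to}a$ in every slot. For the converse we use the polarization formula
\[
\check P(x_1,\dots,x_n)=\frac{1}{2^n n!}\sum_{\varepsilon_i=\pm1}\varepsilon_1\cdots\varepsilon_n P\Bigl(\sum_{i=1}^n\varepsilon_i x_i\Bigr).
\]
If $x_k^{(i)}\overset{w}{\to}x_i$, then $\sum_i\varepsilon_i x_k^{(i)}\overset{w}{\to}\sum_i\varepsilon_i x_i$, so each term converges in norm and hence $\check P$ is Dunford–Pettis at every point. Proposition \ref{DPS} also lets us symmetrize when needed. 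This polarization step is the only non-mechanical point, and it is also what gives (CP3)/(CP5).

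For (CH4)/(CP1) we take $T\in\mathcal{L_{DP}^{\mathrm{ev}}}(E_1,\dots,E_n;F)$ and $\varphi\in E_{n+1}'$, and must show $\varphi T\colon(x_1,\dots,x_{n+1})\mapsto\varphi(x_{n+1})T(x_1,\dots,x_n)$ lies in $\mathcal{L_{DP}^{\mathrm{ev}}}$. If $x_k^{(i)}\overset{w}{\to}x_i$, then $\varphi(x_k^{(n+1)})\to\varphi(x_{n+1})$ as scalars and $T(x_k^{(1)},\dots,x_k^{(n)})\to T(x_1,\dots,x_n)$ in norm, both sequences being bounded. Writing
\[
\alpha_kv_k-\alpha v=(\alpha_k-\alpha)v_k+\alpha(v_k-v),
\]
with $\alpha_k=\varphi(x_k^{(n+1)})$ and $v_k=T(x_k^{(1)},\dots,x_k^{(n)})$, gives norm convergence. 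The polynomial case (CH5), $\varphi P$, follows from the identity $\varphi(x)P(x)-\varphi(a)P(a)=(\varphi(x)-\varphi(a))P(x)+\varphi(a)(P(x)-P(a))$ in the same way.

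For compatibility with $\mathcal{DP}$ we need two further facts. First, for $n=1$ both classes coincide with $\mathcal{DP}$: by linearity, convergence at every point is equivalent to convergence at $0$, using the translation characterization in the first proposition. Second, we need (CP4): if $T\in\mathcal{L_{DP}^{\mathrm{ev}}}(E_1,\dots,E_n;F)$ and $a_i\in E_i$ are fixed for $i\neq j$, then the linear map $x\mapsto T(a_1,\dots,a_{j-1},x,a_{j+1},\dots,a_n)$ is in $\mathcal{DP}$. This follows again by putting constant sequences in the frozen slots.

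I expect no genuine obstacle. The main care needed is to match each condition to the exact formulation in \cite{PR14}, in particular the norm constants for polynomials, which hold because all norms involved are sup norms. The one substantive step is the converse direction of (CH3), handled by polarization as above.
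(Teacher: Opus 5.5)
Your proof is correct and uses the same three ingredients as the paper: constant sequences in the frozen slot for $T_{a_j}$, polarization for $P\in\mathcal{P_{DP}^{\mathrm{ev}}}\iff\check P\in\mathcal{L_{DP}^{\mathrm{ev}}}$, and the splitting $\alpha_kv_k-\alpha v$ for $\gamma T$. The paper stops after these multilinear conditions and gets the polynomial and compatibility conditions from \cite[Remark 2.3(3) and Proposition 2.4]{RST22} together with Proposition \ref{DPS}, whereas you check them directly, which is equally valid; note only that your condition labels do not match the numbering of \cite{PR14} used in the paper, where the $P\leftrightarrow\check P$ equivalence is (CH5)/(CP5) and the product $\gamma T$ is (CH3).
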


\begin{proof}
		
Let $m \in \mathbb{N}$, and let $E$ and $F$ be Banach spaces. We begin by showing that, for any $k \in \{1,\dots,m\}$, $P \in \mathcal{P_{DP}^{\mathrm{ev}}}(^kE;F)$ if and only if $\check{P} \in \mathcal{L_{DP}^{\mathrm{ev}}}(^kE;F)$ (see Properties (CH5) and (CP5) in \cite[Definitions 2.1 and 2.2]{PR14}).

Suppose that $\check{P} \in \mathcal{L_{DP}^{\mathrm{ev}}}(^kE;F)$. Let $(x_j)_{j=1}^{\infty} \subset E$ be such that $x_j \overset{w}{\to} x \in E$. Then
\begin{equation*}
	P(x_j) = \check{P}(x_j,\dots,x_j) \longrightarrow \check{P}(x,\dots,x) = P(x)
\end{equation*}
in norm, and hence $P \in \mathcal{P_{DP}^{\mathrm{ev}}}(^kE;F)$.

Conversely, suppose that $P \in \mathcal{P_{DP}^{\mathrm{ev}}}(^kE;F)$. Let $(x_j^{(i)})_{j=1}^{\infty} \subset E$ be such that $x_j^{(i)} \overset{w}{\to} x_i \in E$, for all $i=1,\dots,k$. Since $\epsilon_i = \pm 1$, we have
\begin{equation*}
	\epsilon_1 x_j^{(1)} + \cdots + \epsilon_k x_j^{(k)} \overset{w}{\longrightarrow} \epsilon_1 x_1 + \cdots + \epsilon_k x_k.
\end{equation*}
Hence,
\begin{align*}
	&\left\|\check{P}(x_j^{(1)},\dots,x_j^{(k)}) - \check{P}(x_1,\dots,x_k)\right\| \\
	&= \frac{1}{2^k k!} \left\| P\!\left(\sum_{i=1}^k \epsilon_i x_j^{(i)}\right) - P\!\left(\sum_{i=1}^k \epsilon_i x_i\right) \right\| \longrightarrow 0
\end{align*}
as $j \to \infty$. Thus, $\check{P} \in \mathcal{L_{DP}^{\mathrm{ev}}}(^kE;F)$.

Next, we show that for any $T \in \mathcal{L_{DP}^{\mathrm{ev}}}(E_1,\dots,E_m;F)$ and $a_i \in E_i$, we have $T_{a_i} \in \mathcal{L_{DP}^{\mathrm{ev}}}(E_1,\dots,E_{i-1},E_{i+1},\dots,E_m;F)$ and
\begin{equation*}
	\|T_{a_i}\| \le \|T\| \|a_i\|,
\end{equation*}
for all $i=1,\dots,m$ (Property (CH1) in \cite[Definition 2.1]{PR14}). Without loss of generality, consider $i=1$. Let $(x_j^{(i)})_{j=1}^{\infty} \subset E_i$ be such that $x_j^{(i)} \overset{w}{\to} x_i$ for $i=2,\dots,m$. Then
\begin{equation*}
	\|T_{a_1}(x_j^{(2)},\dots,x_j^{(m)}) - T_{a_1}(x_2,\dots,x_m)\|
	= \|T(a_1,x_j^{(2)},\dots,x_j^{(m)}) - T(a_1,x_2,\dots,x_m)\| \to 0,
\end{equation*}
since the constant sequence $(x_j^{(1)})_{j=1}^{\infty}=a_1$ converges weakly to $a_1$. The norm inequality is immediate.

Finally, let $T \in \mathcal{L_{DP}^{\mathrm{ev}}}(E_1,\dots,E_m;F)$ and $\gamma \in E_{m+1}'$. We show that $\gamma T \in \mathcal{L_{DP}^{\mathrm{ev}}}(E_1,\dots,E_{m+1};F)$. Let $(x_j^{(i)})_{j=1}^{\infty} \subset E_i$ be such that $x_j^{(i)} \overset{w}{\to} x_i \in E_i$ for all $i=1,\dots,m+1$. Then
\begin{align*}
	&\gamma T(x_j^{(1)},\dots,x_j^{(m+1)}) - \gamma T(x_1,\dots,x_{m+1}) \\
	&= \gamma(x_j^{(m+1)} - x_{m+1})\, T(x_j^{(1)},\dots,x_j^{(m)}) \\
	&\quad + \gamma(x_{m+1})\big(T(x_j^{(1)},\dots,x_j^{(m)}) - T(x_1,\dots,x_m)\big).
\end{align*}
Since $(T(x_j^{(1)},\dots,x_j^{(m)}))$ is bounded and $x_j^{(m+1)} \overset{w}{\to} x_{m+1}$, it follows that
\begin{align*}
	&\|\gamma T(x_j^{(1)},\dots,x_j^{(m+1)}) - \gamma T(x_1,\dots,x_{m+1})\| \\
	&\le |\gamma(x_j^{(m+1)} - x_{m+1})|\, \|T(x_j^{(1)},\dots,x_j^{(m)})\| \\
	&\quad + |\gamma(x_{m+1})|\, \|T(x_j^{(1)},\dots,x_j^{(m)}) - T(x_1,\dots,x_m)\| \to 0
\end{align*}
as $j \to \infty$. The norm estimate is immediate.

The conclusion follows from \cite[Remark 2.3(3) and Proposition 2.4]{RST22} together with Proposition \ref{DPS}.
	
\end{proof}

\begin{remark}
	We present an example showing that this class is not strongly coherent. Let $E_1,\dots,E_m$ and $F$ be Banach spaces, let $\varphi_i \in E_i'$, $i=1,\dots,m$, and let $y \in F$ be nonzero. Choose $(a_1,\dots,a_m) \in E_1 \times \cdots \times E_m$ such that $a_i \notin \ker \varphi_i$ for all $i$.
	
	Define $T \colon E_1 \times \cdots \times E_m \to F$ by
	\begin{equation*}
		T(x_1,\dots,x_m) = \varphi_1(x_1)\cdots \varphi_m(x_m)\, y,
	\end{equation*}
	and let $Q \colon \ell_2 \times \ell_2 \to \mathbb{K}$ be given by
	\begin{equation*}
		Q\big((x_j)_{j=1}^{\infty}, (y_j)_{j=1}^{\infty}\big) = \sum_{j=1}^{\infty} x_j y_j.
	\end{equation*}
	
	Let $(x_j^{(i)})_{j=1}^{\infty} \subset E_i$ be such that $x_j^{(i)} \overset{w}{\to} a_i$ for $i=1,\dots,m$. Since $e_j \overset{w}{\to} 0$ in $\ell_2$, we have
	\begin{align*}
		QT(x_j^{(1)},\dots,x_j^{(m)},e_j,e_j)
		&= Q(e_j,e_j)\, T(x_j^{(1)},\dots,x_j^{(m)}) \\
		&= \varphi_1(x_j^{(1)})\cdots \varphi_m(x_j^{(m)})\, y \\
		&\longrightarrow \varphi_1(a_1)\cdots \varphi_m(a_m)\, y \neq 0.
	\end{align*}
	Thus,
	\begin{equation*}
		QT(x_j^{(1)},\dots,x_j^{(m)},e_j,e_j) \nrightarrow QT(a_1,\dots,a_m,0,0),
	\end{equation*}
	and therefore $QT \notin \mathcal{L_{DP}^{\mathrm{ev}}}(E_1,\dots,E_m,\ell_2,\ell_2;F)$, even though $T \in \mathcal{L_{DP}^{\mathrm{ev}}}(E_1,\dots,E_m;F)$.
\end{remark}

We denote by $\mathcal{L_{DP}^{\mathrm{(CH1)}}}$ the class of all multilinear Dunford–Pettis operators between Banach spaces that satisfy condition (CH1) (see \cite[Definition 3.2]{PR14} and \cite[Definition 2.1]{RST22}).

\begin{theorem}\label{DPCH1}
	Let $E_1,\dots,E_m$ and $F$ be Banach spaces. Then
	\begin{equation*}
		\mathcal{L_{DP}^{\mathrm{(CH1)}}} = \mathcal{L_{DP}^{\mathrm{ev}}}.
	\end{equation*}
\end{theorem}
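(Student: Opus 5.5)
The plan is to prove the two inclusions separately. I read $\mathcal{L_{DP}^{\mathrm{(CH1)}}}$ as the largest subclass of $\mathcal{L_{DP}}$ that is stable under the operation in (CH1). Concretely, $T\in\mathcal{L_{DP}^{\mathrm{(CH1)}}}(E_1,\dots,E_m;F)$ means that $T$ is Dunford–Pettis (at the origin). It also means that every operator obtained from $T$ by fixing one variable, $T_{a_i}$, is again in the class. Iterating, every operator obtained by fixing any subset of the variables at arbitrary vectors is Dunford–Pettis at the origin. This iterated form is exactly what the proof will use, so I will state this reading explicitly at the start.

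\emph{Inclusion $\mathcal{L_{DP}^{\mathrm{ev}}}\subset\mathcal{L_{DP}^{\mathrm{(CH1)}}}$.} By Remark \ref{DPevcontDP}(2), $\mathcal{L_{DP}^{\mathrm{ev}}}\subset\mathcal{L_{DP}}$. By the (CH1) part of the proof of Theorem \ref{DPcoercomp}, if $T\in\mathcal{L_{DP}^{\mathrm{ev}}}$ then $T_{a_i}\in\mathcal{L_{DP}^{\mathrm{ev}}}$ with $\|T_{a_i}\|\le\|T\|\|a_i\|$. Induction on the number of fixed variables then shows that every partial operator of $T$ lies in $\mathcal{L_{DP}^{\mathrm{ev}}}$, hence in $\mathcal{L_{DP}}$. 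So $T$ satisfies (CH1) within $\mathcal{L_{DP}}$.

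\emph{Inclusion $\mathcal{L_{DP}^{\mathrm{(CH1)}}}\subset\mathcal{L_{DP}^{\mathrm{ev}}}$.} Let $T\in\mathcal{L_{DP}^{\mathrm{(CH1)}}}$, fix $(a_1,\dots,a_m)$, and let $(y_j^{(i)})_j$ be weakly null in $E_i$. By the first Proposition of this section, it suffices to show that $T(a_1+y_j^{(1)},\dots,a_m+y_j^{(m)})-T(a_1,\dots,a_m)\to 0$ in norm. Expanding by multilinearity gives
\begin{equation*}
T(a_1+y_j^{(1)},\dots,a_m+y_j^{(m)})-T(a_1,\dots,a_m)=\sum_{\emptyset\neq S\subset\{1,\dots,m\}} T\big(z_j^{S,1},\dots,z_j^{S,m}\big),
\end{equation*}
where $z_j^{S,i}=y_j^{(i)}$ if $i\in S$ and $z_j^{S,i}=a_i$ if $i\notin S$. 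For each nonempty $S$, the term is the partial operator $T_S$ obtained by freezing the coordinates $i\notin S$ at $a_i$. It is evaluated at the weakly null sequences $(y_j^{(i)})_{i\in S}$. For $S=\{1,\dots,m\}$ this is $T$ itself, which is Dunford–Pettis at the origin. For the other $S$, the (CH1) hypothesis, iterated, says $T_S$ is Dunford–Pettis at the origin, so each term tends to $0$ in norm. There are finitely many terms, so the sum tends to $0$, and hence $T\in\mathcal{L_{DP}^{\mathrm{ev}}}$.

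The only delicate point is the definitional one. I must make sure that the (CH1) condition, as in \cite[Definition 2.1]{RST22}, really applies after fixing several variables, not just one. If the definition only speaks of fixing a single variable, I would argue that membership of $T_{a_i}$ in the class again entails (CH1) for $T_{a_i}$. An induction on the number of frozen coordinates then reaches every $T_S$. Once this is in place, the multilinear expansion above settles the converse directly.
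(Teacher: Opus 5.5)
Your proof is correct and follows the paper's argument: the inclusion $\mathcal{L_{DP}^{\mathrm{ev}}}\subset\mathcal{L_{DP}^{\mathrm{(CH1)}}}$ comes from Remark \ref{DPevcontDP}(2) together with the (CH1) part of Theorem \ref{DPcoercomp}. The reverse inclusion comes from expanding $T(x_j^{(1)},\dots,x_j^{(m)})-T(a_1,\dots,a_m)$ by multilinearity into partial operators evaluated at weakly null sequences, each of which is Dunford--Pettis at the origin by iterated (CH1); your subset-indexed expansion just writes out for general $m$ what the paper does explicitly for $m=3$ (only avoid the symbol $T_S$, which the paper already uses for the symmetrization).
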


\begin{proof}
	Let $E_1,\dots,E_m$ and $F$ be Banach spaces. It follows from Remark \ref{DPevcontDP} and Theorem \ref{DPcoercomp} that
	\begin{equation*}
		\mathcal{L_{DP}^{\mathrm{ev}}} \subset \mathcal{L_{DP}^{\mathrm{(CH1)}}}.
	\end{equation*}
	
	We now prove the reverse inclusion. Let $T \in \mathcal{L_{DP}^{\mathrm{(CH1)}}}(E_1,\dots,E_m;F)$. We show that $T \in \mathcal{L_{DP}^{\mathrm{ev}}}(E_1,\dots,E_m;F)$. Let $(x_j^{(i)})_{j=1}^{\infty} \subset E_i$ be such that
	\begin{equation*}
		x_j^{(i)} \overset{w}{\to} a_i \in E_i,
	\end{equation*}
	for all $i=1,\dots,m$. Then $(x_j^{(i)} - a_i) \overset{w}{\to} 0$ in $E_i$ for all $i=1,\dots,m$.
	
	For simplicity, we consider the case $m=3$; the general case follows analogously. We have
	\begin{align*}
		&T(x_j^{(1)},x_j^{(2)},x_j^{(3)}) - T(a_1,a_2,a_3) \\
		&= T(x_j^{(1)}-a_1, x_j^{(2)}-a_2, x_j^{(3)}-a_3) \\
		&\quad + T_{a_2,a_3}(x_j^{(1)}-a_1) + T_{a_1,a_3}(x_j^{(2)}-a_2) + T_{a_1,a_2}(x_j^{(3)}-a_3) \\
		&\quad + T_{a_3}(x_j^{(1)}-a_1, x_j^{(2)}-a_2) + T_{a_2}(x_j^{(1)}-a_1, x_j^{(3)}-a_3) + T_{a_1}(x_j^{(2)}-a_2, x_j^{(3)}-a_3).
	\end{align*}
	
	By property (CH1), we have
	\begin{equation*}
		T_{a_1} \in \mathcal{L_{DP}^{\mathrm{(CH1)}}}(E_2,E_3;F), \quad
		T_{a_2} \in \mathcal{L_{DP}^{\mathrm{(CH1)}}}(E_1,E_3;F), \quad
		T_{a_3} \in \mathcal{L_{DP}^{\mathrm{(CH1)}}}(E_1,E_2;F),
	\end{equation*}
	and hence
	\begin{equation*}
		T_{a_1,a_2} \in \mathcal{DP}(E_3;F), \quad
		T_{a_1,a_3} \in \mathcal{DP}(E_2;F), \quad
		T_{a_2,a_3} \in \mathcal{DP}(E_1;F).
	\end{equation*}
	
	It follows that each term in the above decomposition converges to zero as $j \to \infty$, and therefore
	\begin{equation*}
		\|T(x_j^{(1)},x_j^{(2)},x_j^{(3)}) - T(a_1,a_2,a_3)\| \to 0.
	\end{equation*}
	Thus, $T \in \mathcal{L_{DP}^{\mathrm{ev}}}(E_1,\dots,E_m;F)$.
\end{proof}

The next result provides a condition under which every multilinear operator between Banach spaces is Dunford–Pettis at every point.

\begin{theorem}\label{DPEVS}
	Let $E_1,\dots,E_m$ and $F$ be Banach spaces. If each $E_i$ has the Schur property for $i=1,\dots,m$, then every continuous multilinear operator from $E_1 \times \cdots \times E_m$ into $F$ is Dunford–Pettis at every point. In particular,
	\begin{equation*}
		\mathcal{L_{DP}^{\mathrm{ev}}}(E_1,\dots,E_m;F) = \mathcal{L}(E_1,\dots,E_m;F).
	\end{equation*}
\end{theorem}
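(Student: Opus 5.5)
The argument is direct: Schur turns weak convergence into norm convergence, and continuous multilinear operators are norm-continuous. Fix $T \in \mathcal{L}(E_1,\dots,E_m;F)$ and a point $(a_1,\dots,a_m)\in E_1\times\cdots\times E_m$. Take sequences $(x_j^{(i)})_{j=1}^{\infty}\subset E_i$ with $x_j^{(i)} \overset{w}{\to} a_i$. Then $x_j^{(i)}-a_i \overset{w}{\to} 0$. Since $E_i$ has the Schur property, $Id_{E_i}$ is a Dunford–Pettis operator, so $\|x_j^{(i)}-a_i\|\to 0$ for every $i=1,\dots,m$.

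Next I will use the telescoping decomposition
\begin{equation*}
T(x_j^{(1)},\dots,x_j^{(m)}) - T(a_1,\dots,a_m) = \sum_{i=1}^{m} T\big(a_1,\dots,a_{i-1},\, x_j^{(i)}-a_i,\, x_j^{(i+1)},\dots,x_j^{(m)}\big).
\end{equation*}
This gives the bound
\begin{equation*}
\|T(x_j^{(1)},\dots,x_j^{(m)}) - T(a_1,\dots,a_m)\| \le \|T\| \sum_{i=1}^{m} \|a_1\|\cdots\|a_{i-1}\|\,\|x_j^{(i)}-a_i\|\,\|x_j^{(i+1)}\|\cdots\|x_j^{(m)}\|.
\end{equation*}
Weakly convergent sequences are bounded by the uniform boundedness principle, so $\sup_j\|x_j^{(k)}\|<\infty$ for each $k$. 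Each summand is therefore a bounded quantity times $\|x_j^{(i)}-a_i\|\to 0$, and the right-hand side tends to $0$. Hence $T$ is Dunford–Pettis at $(a_1,\dots,a_m)$.

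Since the point was arbitrary, $T\in\mathcal{L_{DP}^{\mathrm{ev}}}(E_1,\dots,E_m;F)$. This gives $\mathcal{L}(E_1,\dots,E_m;F)\subset \mathcal{L_{DP}^{\mathrm{ev}}}(E_1,\dots,E_m;F)$, and the reverse inclusion holds by definition, so the two spaces coincide.

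There is no real obstacle. The only point needing care is the boundedness of the weakly convergent sequences, which the uniform boundedness principle supplies, used exactly as in the closedness proposition above.
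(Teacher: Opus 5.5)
Your proof is correct and follows the same route as the paper: the Schur property upgrades $x_j^{(i)}\overset{w}{\to}a_i$ to norm convergence, and norm continuity of $T$ finishes, via a decomposition in which every term carries a factor $x_j^{(i)}-a_i$. The only difference is in the decomposition. The paper expands $T(x_j^{(1)},\dots,x_j^{(m)})-T(a_1,\dots,a_m)$ into all $2^m-1$ mixed terms, written out only for $m=3$; this needs no boundedness, since every factor is either a fixed $a_i$ or a null difference. Your $m$-term telescoping sum treats general $m$ explicitly but needs $\sup_j\|x_j^{(k)}\|<\infty$ --- and that bound is immediate from the norm convergence you already have, so the uniform boundedness principle is not needed.
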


\begin{proof}
	Let $E_1,\dots,E_m$ and $F$ be Banach spaces, and assume that each $E_i$ has the Schur property for $i=1,\dots,m$. Let $T \in \mathcal{L}(E_1,\dots,E_m;F)$. We show that $T \in \mathcal{L_{DP}^{\mathrm{ev}}}(E_1,\dots,E_m;F)$.
	
	Let $(x_j^{(i)})_{j=1}^{\infty} \subset E_i$ be such that
	\begin{equation*}
		x_j^{(i)} \overset{w}{\to} a_i \in E_i,
	\end{equation*}
	for all $i=1,\dots,m$. Then
	\begin{equation*}
		x_j^{(i)} - a_i \overset{w}{\to} 0,
	\end{equation*}
	and since each $E_i$ has the Schur property, it follows that
	\begin{equation*}
		x_j^{(i)} - a_i \longrightarrow 0
	\end{equation*}
	in norm, for all $i=1,\dots,m$.
	
	For simplicity, we consider the case $m=3$; the general case follows analogously. Then
	\begin{align*}
		&\|T(x_j^{(1)},x_j^{(2)},x_j^{(3)}) - T(a_1,a_2,a_3)\| \\
		&\le \|T(x_j^{(1)}-a_1, x_j^{(2)}-a_2, x_j^{(3)}-a_3)\| \\
		&\quad + \|T_{a_2,a_3}(x_j^{(1)}-a_1)\| + \|T_{a_1,a_3}(x_j^{(2)}-a_2)\| + \|T_{a_1,a_2}(x_j^{(3)}-a_3)\| \\
		&\quad + \|T_{a_3}(x_j^{(1)}-a_1, x_j^{(2)}-a_2)\| + \|T_{a_2}(x_j^{(1)}-a_1, x_j^{(3)}-a_3)\| \\
		&\quad + \|T_{a_1}(x_j^{(2)}-a_2, x_j^{(3)}-a_3)\| \\
		&\le \|T\| \prod_{i=1}^{3} \|x_j^{(i)} - a_i\|
		+ \|T_{a_2,a_3}\| \|x_j^{(1)} - a_1\|
		+ \|T_{a_1,a_3}\| \|x_j^{(2)} - a_2\| \\
		&\quad + \|T_{a_1,a_2}\| \|x_j^{(3)} - a_3\|
		+ \|T_{a_3}\| \|x_j^{(1)} - a_1\| \|x_j^{(2)} - a_2\| \\
		&\quad + \|T_{a_2}\| \|x_j^{(1)} - a_1\| \|x_j^{(3)} - a_3\|
		+ \|T_{a_1}\| \|x_j^{(2)} - a_2\| \|x_j^{(3)} - a_3\|.
	\end{align*}
	
	Since $x_j^{(i)} - a_i \to 0$ in norm for all $i$, each term on the right-hand side converges to zero as $j \to \infty$. Hence,
	\begin{equation*}
		\|T(x_j^{(1)},x_j^{(2)},x_j^{(3)}) - T(a_1,a_2,a_3)\| \to 0,
	\end{equation*}
	and therefore $T \in \mathcal{L_{DP}^{\mathrm{ev}}}(E_1,\dots,E_m;F)$.
\end{proof}

\begin{corollary}
	Let $E_1,\dots,E_m$ and $F$ be Banach spaces. If each $E_i$ has the Schur property for $i=1,\dots,m$, then
	\[
	\mathcal{L_{DP}}(E_1,\dots,E_m;F) = \mathcal{L_{DP}^{\mathrm{ev}}}(E_1,\dots,E_m;F).
	\]
\end{corollary}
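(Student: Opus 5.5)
The corollary follows by sandwiching $\mathcal{L_{DP}}(E_1,\dots,E_m;F)$ between two classes that Theorem \ref{DPEVS} forces to coincide. The argument needs two inclusions.

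\emph{First inclusion.} By Remark \ref{DPevcontDP}(2),
\[
\mathcal{L_{DP}^{\mathrm{ev}}}(E_1,\dots,E_m;F)\subset \mathcal{L_{DP}}(E_1,\dots,E_m;F).
\]
This holds for arbitrary Banach spaces, since being Dunford--Pettis at every point includes being Dunford--Pettis at the origin.

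\emph{Second inclusion.} Trivially $\mathcal{L_{DP}}(E_1,\dots,E_m;F)\subset\mathcal{L}(E_1,\dots,E_m;F)$. Because each $E_i$ has the Schur property, Theorem \ref{DPEVS} gives
\[
\mathcal{L}(E_1,\dots,E_m;F)=\mathcal{L_{DP}^{\mathrm{ev}}}(E_1,\dots,E_m;F).
\]
Chaining these yields
\[
\mathcal{L_{DP}^{\mathrm{ev}}}\subset\mathcal{L_{DP}}\subset\mathcal{L}=\mathcal{L_{DP}^{\mathrm{ev}}},
\]
all over $(E_1,\dots,E_m;F)$. So every inclusion is an equality, which gives the claimed identity and, as a byproduct, also $\mathcal{L_{DP}}(E_1,\dots,E_m;F)=\mathcal{L}(E_1,\dots,E_m;F)$.

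There is no genuine obstacle. The only point to check is that the norms agree as well as the underlying sets: all three classes carry the usual supremum norm $\|\cdot\|$, so the equality holds isometrically too.
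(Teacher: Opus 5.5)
Your proof is correct and follows the paper's approach. The paper states the corollary without proof as an immediate consequence of Theorem~\ref{DPEVS}, and your chain $\mathcal{L_{DP}^{\mathrm{ev}}}\subset\mathcal{L_{DP}}\subset\mathcal{L}=\mathcal{L_{DP}^{\mathrm{ev}}}$, with Remark~\ref{DPevcontDP}(2) giving the first inclusion, is what makes it immediate; your byproduct $\mathcal{L_{DP}}(E_1,\dots,E_m;F)=\mathcal{L}(E_1,\dots,E_m;F)$ and the observation that the equality is isometric are also right.
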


Let us recall that, given Banach spaces $E_1,\dots,E_m$, we denote their projective tensor product by $E_1 \hat{\otimes}_{\pi} \cdots \hat{\otimes}_{\pi} E_m$ (see \cite{R02} for details). As noted in \cite[Section 4]{BT15}, for any Banach spaces $E_1,\dots,E_m$ and $F$, if $E_1 \hat{\otimes}_{\pi} \cdots \hat{\otimes}_{\pi} E_m$ has the Schur property, then
\begin{equation*}
	\mathcal{DP} \circ \mathcal{L}(E_1,\dots,E_m;F) = \mathcal{L}(E_1,\dots,E_m;F).
\end{equation*}
Therefore, we readily obtain the following result.

\begin{proposition}
	Let $E_1,\dots,E_m$ and $F$ be Banach spaces. If each $E_i$ has the Schur property for $i=1,\dots,m$ and $E_1 \hat{\otimes}_{\pi} \cdots \hat{\otimes}_{\pi} E_m$ also has the Schur property, then
	\begin{equation*}
		\mathcal{DP} \circ \mathcal{L}(E_1,\dots,E_m;F)
		= \mathcal{L_{DP}^{\mathrm{ev}}}(E_1,\dots,E_m;F).
	\end{equation*}
\end{proposition}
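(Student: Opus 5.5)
The argument consists of two coincidences with the full space $\mathcal{L}(E_1,\dots,E_m;F)$. Under the stated hypotheses both $\mathcal{DP}\circ\mathcal{L}(E_1,\dots,E_m;F)$ and $\mathcal{L_{DP}^{\mathrm{ev}}}(E_1,\dots,E_m;F)$ equal all of $\mathcal{L}(E_1,\dots,E_m;F)$, and the claimed equality follows at once. So I would not compare the two classes directly.

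\emph{First coincidence.} Since each $E_i$ has the Schur property, Theorem \ref{DPEVS} gives
\[
\mathcal{L_{DP}^{\mathrm{ev}}}(E_1,\dots,E_m;F)=\mathcal{L}(E_1,\dots,E_m;F).
\]

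\emph{Second coincidence.} Since $E_1\hat{\otimes}_{\pi}\cdots\hat{\otimes}_{\pi}E_m$ has the Schur property, the observation from \cite[Section 4]{BT15} recalled just before the statement gives
\[
\mathcal{DP}\circ\mathcal{L}(E_1,\dots,E_m;F)=\mathcal{L}(E_1,\dots,E_m;F).
\]
I would spell out the reason briefly for completeness. Every $T\in\mathcal{L}(E_1,\dots,E_m;F)$ factors as $T=T_L\circ\otimes$, where $\otimes$ is the canonical $m$-linear map into $E_1\hat{\otimes}_{\pi}\cdots\hat{\otimes}_{\pi}E_m$ and $T_L$ is its linearization. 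By the Schur property of the projective tensor product, $\mathrm{Id}$ on this space is Dunford–Pettis. Because $\mathcal{DP}$ is an operator ideal, $T_L=T_L\circ\mathrm{Id}$ is Dunford–Pettis as well, so $T=T_L\circ\otimes$ lies in $\mathcal{DP}\circ\mathcal{L}$. The reverse inclusion is trivial.

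Combining the two displays yields the asserted equality. There is no real obstacle. The only point needing care is the convention for the composition ideal $\mathcal{DP}\circ\mathcal{L}$. It should consist of the maps $u\circ A$ with $A$ an arbitrary continuous multilinear map into some Banach space $G$ and $u\in\mathcal{DP}(G;F)$. Under that convention the factorization through the projective tensor product is admissible, with $G=E_1\hat{\otimes}_{\pi}\cdots\hat{\otimes}_{\pi}E_m$.
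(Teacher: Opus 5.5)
Your proof is correct and follows the same route as the paper: you identify both $\mathcal{L_{DP}^{\mathrm{ev}}}(E_1,\dots,E_m;F)$ and $\mathcal{DP}\circ\mathcal{L}(E_1,\dots,E_m;F)$ with all of $\mathcal{L}(E_1,\dots,E_m;F)$, via Theorem \ref{DPEVS} and the observation from \cite[Section 4]{BT15} respectively. Your factorization $T=T_L\circ\otimes$ through $E_1\hat{\otimes}_{\pi}\cdots\hat{\otimes}_{\pi}E_m$, together with the standard convention for the composition ideal, is a valid unpacking of the second coincidence, which the paper only cites.
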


\begin{proof}
	Since each $E_i$ has the Schur property for $i=1,\dots,m$ and $E_1 \hat{\otimes}_{\pi} \cdots \hat{\otimes}_{\pi} E_m$ also has the Schur property, it follows that
	\begin{equation*}
		\mathcal{L_{DP}^{\mathrm{ev}}}(E_1,\dots,E_m;F)
		= \mathcal{L}(E_1,\dots,E_m;F)
		= \mathcal{DP} \circ \mathcal{L}(E_1,\dots,E_m;F).
	\end{equation*}
\end{proof}

We say that an $m$-linear operator $T \in \mathcal{L}(E_1,\dots, E_m; F)$ is compact if 
$T(B_{E_1}\times \cdots \times B_{E_m})$ is a relatively compact subset of $F$. 
We denote by $\mathcal{K}$ the ideal of compact linear operators and by $\mathcal{L}_K$ 
the ideal of compact multilinear operators. By \cite{P63}, we have
\begin{equation*}
	\mathcal{L}_K = \mathcal{K} \circ \mathcal{L}.
\end{equation*}

Let $E_1,\dots, E_m$ and $F$ be Banach spaces. Assume that each $E_i$, $i=1,\dots,m$, 
and the projective tensor product $E_1 \hat{\otimes}_{\pi} \cdots \hat{\otimes}_{\pi} E_m$ 
have the Schur property. Then, for every $T \in \mathcal{L}(E_1,\dots, E_m; F)$,
\begin{align*}
	T \in \mathcal{L}_K(E_1,\dots, E_m; F) 
	&\iff T_L \in \mathcal{K}(E_1 \hat{\otimes}_{\pi} \cdots \hat{\otimes}_{\pi} E_m; F) \\
	&\subset \mathcal{DP}(E_1 \hat{\otimes}_{\pi} \cdots \hat{\otimes}_{\pi} E_m; F) \\
	&\iff T \in \mathcal{DP} \circ \mathcal{L} 
	= \mathcal{L_{DP}^{\text{ev}}}(E_1,\dots, E_m; F),
\end{align*}
where $T_L$ denotes the linearization of $T$ (see \cite{BPR07} for details). Therefore,
\begin{equation*}
	\mathcal{L}_K(E_1,\dots, E_m; F) 
	\subset \mathcal{L_{DP}^{\text{ev}}}(E_1,\dots, E_m; F).
\end{equation*}

In general, this inclusion is strict, as the following example shows.

\begin{example}
	Let $T\colon \ell_1 \times \cdots \times \ell_1 \longrightarrow \ell_1$ be defined by
	\begin{equation*}
		T\big((x_j^{(1)})_{j=1}^{\infty},\dots,(x_j^{(m)})_{j=1}^{\infty}\big)
		= (x_j^{(1)}\cdots x_j^{(m)})_{j=1}^{\infty}.
	\end{equation*}
	
	Note that $(e_j)_{j=1}^{\infty}$ is bounded in $\ell_1$, since $\|e_j\|_1 = 1$ for all $j$. 
	Moreover,
	\[
	T(e_j,\dots,e_j) = e_j,
	\]
	which has no convergent subsequence. Hence, $T \notin \mathcal{L}_K(\ell_1,\dots,\ell_1; \ell_1)$.
	
	On the other hand, let $(x_j^{(i)})_{j=1}^{\infty}$ be weakly convergent to $x_i$ in $\ell_1$, 
	for $i=1,\dots,m$. Since $\ell_1$ has the Schur property, the convergence is actually 
	in norm. For simplicity, consider $m=3$. Then
	\begin{align*}
		&\|T(x_j^{(1)},x_j^{(2)},x_j^{(3)}) - T(x_1,x_2,x_3)\|_1 \\
		&\le \|x_j^{(1)} - x_1\|_1 \|x_j^{(2)}\|_1 \|x_j^{(3)}\|_1 \\
		&\quad + \|x_1\|_1 \|x_j^{(2)} - x_2\|_1 \|x_j^{(3)}\|_1 \\
		&\quad + \|x_1\|_1 \|x_2\|_1 \|x_j^{(3)} - x_3\|_1.
	\end{align*}
	Since all sequences are bounded and converge in norm, it follows that
	\[
	\|T(x_j^{(1)},x_j^{(2)},x_j^{(3)}) - T(x_1,x_2,x_3)\|_1 \to 0.
	\]
	
	Therefore, $T \in \mathcal{L_{DP}^{\text{ev}}}(\ell_1,\dots,\ell_1; \ell_1)$.
\end{example}

\section{Weakly Dunford-Pettis Multilinear Operators}

We begin by defining the linear case, which will be useful for showing that the class of weakly Dunford-Pettis linear operators is not a hyper-ideal (see \cite{BT15}) and that the coherence and compatibility condition (CH1) fails (see \cite{PR14}).

\begin{definition}
	Let $E$ and $F$ be Banach spaces and let $T \in \mathcal{L}(E;F)$. We say that $T$ is weakly Dunford-Pettis if, for every pair of sequences $(x_j)_{j=1}^{\infty} \subset E$ and $(f_j)_{j=1}^{\infty} \subset F'$ such that $x_j \overset{w}{\to} 0$ and $f_j \overset{w}{\to} 0$, we have
	\begin{equation*}
		f_j\big(T(x_j)\big) \longrightarrow 0.
	\end{equation*}
\end{definition}

We denote by $\mathcal{WDP}(E;F)$ the class of all weakly Dunford-Pettis linear operators from $E$ into $F$. It is straightforward to verify that $(\mathcal{WDP},\|\cdot\|)$ is a Banach operator ideal.

%We will say that a Banach space has the Dunford-Pettis property if the identity, $Id_E\colon E \rightarrow E$ given by $Id_X(x) = x$ for all $x \in E$, is a weakly Dunford-Pettis application.

\begin{definition}
	Let $E_1,\dots,E_m$ and $F$ be Banach spaces, and let $T \in \mathcal{L}(E_1,\dots,E_m;F)$. We say that $T$ is a weakly Dunford-Pettis $m$-linear operator if, for every family of sequences $(x_j^{(i)})_{j=1}^{\infty} \subset E_i$, $i=1,\dots,m$, and every sequence $(f_j)_{j=1}^{\infty} \subset F'$ such that $x_j^{(i)} \overset{w}{\to} 0$ for all $i=1,\dots,m$ and $f_j \overset{w}{\to} 0$, we have
	\begin{equation*}
		f_j\big(T(x_j^{(1)},\dots,x_j^{(m)})\big) \longrightarrow 0.
	\end{equation*}
\end{definition}

We denote by $\mathcal{L_{WDP}}$ the class of all weakly Dunford-Pettis multilinear operators.

\begin{proposition}\label{DPWDP}
	Let $E_1,\dots,E_m$ and $F$ be Banach spaces. Every Dunford--Pettis $m$-linear operator is weakly Dunford--Pettis.
\end{proposition}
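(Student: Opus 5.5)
The plan is to work directly from the definitions. Recall that $T \in \mathcal{L_{DP}}(E_1,\dots,E_m;F)$ (Dunford--Pettis at the origin, as in \cite{RST22}) means: whenever $x_j^{(i)} \overset{w}{\to} 0$ in $E_i$ for $i=1,\dots,m$, we have $\|T(x_j^{(1)},\dots,x_j^{(m)})\| \to 0$. So I fix such $T$, weakly null sequences $(x_j^{(i)})_{j=1}^{\infty} \subset E_i$, and a sequence $(f_j)_{j=1}^{\infty} \subset F'$ with $f_j \overset{w}{\to} 0$ in $F'$. The goal is to show that $f_j\big(T(x_j^{(1)},\dots,x_j^{(m)})\big) \to 0$.

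The first step is to obtain a uniform bound on $(f_j)$. A weakly convergent sequence in the Banach space $F'$ is bounded; this follows from the uniform boundedness principle applied via the canonical embedding into $F'''$. (If the intended convergence is only weak$^*$, the same conclusion holds because $F$ is complete.) Hence there is $M>0$ with $\|f_j\| \le M$ for all $j$.

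The second step is the estimate
\begin{equation*}
|f_j(T(x_j^{(1)},\dots,x_j^{(m)}))| \le \|f_j\|\,\|T(x_j^{(1)},\dots,x_j^{(m)})\| \le M\,\|T(x_j^{(1)},\dots,x_j^{(m)})\|.
\end{equation*}
The right-hand side tends to $0$ because $T$ is Dunford--Pettis. Therefore $T \in \mathcal{L_{WDP}}(E_1,\dots,E_m;F)$.

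There is essentially no obstacle. The only point needing care is the boundedness of $(f_j)$, which comes from the uniform boundedness principle. By Remark~\ref{DPevcontDP}, the result also gives $\mathcal{L_{DP}^{\mathrm{ev}}} \subset \mathcal{L_{WDP}}$.
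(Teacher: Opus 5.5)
Your proof is correct and follows the paper's argument: bound $\|f_j\|$ uniformly (weakly convergent sequences are bounded), then use $|f_j(T(x_j^{(1)},\dots,x_j^{(m)}))| \le K\,\|T(x_j^{(1)},\dots,x_j^{(m)})\| \to 0$, which holds because $T$ is Dunford--Pettis. Your extra justifications are also correct: boundedness via uniform boundedness through $F'''$ (or through completeness of $F$ in the weak$^*$ case), and the closing remark that $\mathcal{L_{DP}^{\mathrm{ev}}} \subset \mathcal{L_{WDP}}$.
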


\begin{proof}
	Let $T \in \mathcal{L_{DP}}(E_1,\dots,E_m;F)$. Let $(x_j^{(i)})_{j=1}^{\infty} \subset E_i$, $i=1,\dots,m$, and $(f_j)_{j=1}^{\infty} \subset F'$ be such that $x_j^{(i)} \overset{w}{\to} 0$ for all $i=1,\dots,m$ and $f_j \overset{w}{\to} 0$.
	
	Since $T$ is Dunford--Pettis, we have
	\begin{equation*}
		\|T(x_j^{(1)},\dots,x_j^{(m)})\| \longrightarrow 0.
	\end{equation*}
	Moreover, since every weakly convergent sequence is bounded, there exists $K>0$ such that $\|f_j\| \le K$ for all $j$. Therefore,
	\begin{equation*}
		|f_j(T(x_j^{(1)},\dots,x_j^{(m)}))|
		\le \|f_j\| \, \|T(x_j^{(1)},\dots,x_j^{(m)})\|
		\le K \, \|T(x_j^{(1)},\dots,x_j^{(m)})\|.
	\end{equation*}
	Hence,
	\begin{equation*}
		f_j(T(x_j^{(1)},\dots,x_j^{(m)})) \longrightarrow 0,
	\end{equation*}
	and thus $T$ is weakly Dunford--Pettis.
\end{proof}

Following the ideas presented by Ribeiro, Santos and Torres in \cite[Proposition 2.8]{RST22}, we can easily see that $(\mathcal{L_{WDP}}, \|\cdot\|)$, endowed with the usual norm, is a Banach multi-ideal that generalizes the Banach operator ideal $\mathcal{WDP}$.

\begin{remark}
	It is important to emphasize that, in general, the inclusion in Proposition \ref{DPWDP} is strict, since there exist weakly Dunford-Pettis multilinear operators that are not Dunford-Pettis, as will be shown below.
\end{remark}
%%%%%%%%%%%%%%% EXEMPLO RESUMIDO E COMENTADO %%%%%%%%%%%%%%%%%%%%
%\begin{example}
%	Consider the mapping $T\colon c_0 \times c_0 \to c_0$ defined by
%	\[
%	T\big((x_j)_{j=1}^{\infty}, (y_j)_{j=1}^{\infty}\big) = (x_j y_j)_{j=1}^{\infty}.
%	\]
%	Since $e_j \overset{w}{\to} 0$ in $c_0$ and
%	\[
%	\|T(e_j,e_j)\|_{\infty} = \|e_j\|_{\infty} = 1,
%	\]
%	it follows that $T$ is not a Dunford-Pettis bilinear mapping. 
%	
%	On the other hand, using the fact that $c_0' = \ell_1$ and that $\ell_1$ has the Schur property, it is straightforward to verify that $T \in \mathcal{L}_{WDP}(c_0,c_0;c_0)$.
%\end{example}

\begin{example}
	Consider the mapping $T\colon c_0 \times c_0 \to c_0$ defined by
	\[
	T\big((x_j)_{j=1}^{\infty}, (y_j)_{j=1}^{\infty}\big) = (x_j y_j)_{j=1}^{\infty}.
	\]
	Since $e_j \overset{w}{\to} 0$ in $c_0$ and
	\[
	\|T(e_j,e_j)\|_{\infty} = \|e_j\|_{\infty} = 1,
	\]
	it follows that $T$ is not a Dunford--Pettis bilinear mapping.
	
	On the other hand, we claim that $T \in \mathcal{L_{WDP}}(c_0,c_0;c_0)$. Indeed, let $(x_j)_{j=1}^{\infty}$ and $(y_j)_{j=1}^{\infty}$ be sequences in $c_0$ such that $x_j \overset{w}{\to} 0$ and $y_j \overset{w}{\to} 0$, and let $(f_j)_{j=1}^{\infty} \subset \ell_1 = c_0'$ be such that $f_j \overset{w}{\to} 0$. Since $\ell_1$ has the Schur property, it follows that $\|f_j\|_1 \to 0$. Moreover, the sequences $(x_j)$ and $(y_j)$ are bounded, hence there exists $C>0$ such that $\|x_j\|_{\infty}, \|y_j\|_{\infty} \le C$ for all $j$. Therefore,
	\[
	|f_j(T(x_j,y_j))|
	\le \|f_j\|_1 \, \|T(x_j,y_j)\|_{\infty}
	\le \|f_j\|_1 \, \|x_j\|_{\infty} \|y_j\|_{\infty}
	\le C^2 \|f_j\|_1 \longrightarrow 0.
	\]
	Thus, $T$ is weakly Dunford--Pettis.
\end{example}

As observed above, the class $(\mathcal{L_{WDP}}, \|\cdot\|)$ is a Banach multi-ideal and therefore enjoys the usual properties of such ideals. It is natural to ask whether this class also satisfies the hyper-ideal property introduced by Botelho and Torres in \cite{BT15}, that is, whether $(\mathcal{L_{WDP}}, \|\cdot\|)$ forms a hyper-ideal of multilinear mappings.

\begin{remark}
	Although $(\mathcal{L_{WDP}}, \|\cdot\|)$ is a Banach multi-ideal, it is not a hyper-ideal of multilinear mappings. Indeed, consider the mapping
	\[
	T\colon \ell_2 \times \ell_2 \to \ell_1, \quad
	T\big((x_j)_{j=1}^{\infty}, (y_j)_{j=1}^{\infty}\big) = (x_j y_j)_{j=1}^{\infty},
	\]
	as in Example 2.9 of \cite{RST22}. It is straightforward to verify that $T \notin \mathcal{L_{WDP}}(\ell_2,\ell_2;\ell_1)$. On the other hand, by Schur's theorem (see \cite[Theorem 1.7]{DJT95}) and the fact that
	\[
	\mathcal{CC}(\ell_1) = \mathcal{DP}(\ell_1) \subset \mathcal{WDP}(\ell_1),
	\]
	we have $Id_{\ell_1} \in \mathcal{WDP}(\ell_1)$. If $(\mathcal{L_{WDP}}, \|\cdot\|)$ were a hyper-ideal, then
	\[
	T = Id_{\ell_1} \circ T \in \mathcal{L_{WDP}}(\ell_2,\ell_2;\ell_1),
	\]
	which is a contradiction.
\end{remark}

\begin{definition}
	Let $m \in \mathbb{N}$, and let $E$ and $F$ be Banach spaces. A mapping $P \in \mathcal{P}(^mE;F)$ is said to be a weakly Dunford-Pettis $m$-homogeneous polynomial if, for every sequence $(x_j)_{j=1}^{\infty} \subset E$ and every sequence $(f_j)_{j=1}^{\infty} \subset F'$ such that $x_j \overset{w}{\to} 0$ and $f_j \overset{w}{\to} 0$, we have
	\begin{equation*}
		f_j\big(P(x_j)\big) \longrightarrow 0.
	\end{equation*}
\end{definition}

We denote by $\mathcal{P_{WDP}}$ the class of all weakly Dunford-Pettis homogeneous polynomials.

It is possible to relate the class of weakly Dunford--Pettis multilinear mappings to that of weakly Dunford-Pettis homogeneous polynomials through the following result, whose proof follows the same ideas as the proof of condition $(CH5)$ in Theorem \ref{DPcoercomp}.

Before proving this result, we recall an important notion in multilinear theory. Given a Banach multi-ideal $(\mathcal{M}, \|\cdot\|_{\mathcal{M}})$, the associated Banach ideal of homogeneous polynomials is defined by
\begin{equation*}
	\mathcal{P}_{\mathcal{M}} = \{P \in \mathcal{P} \; ; \; \check{P} \in \mathcal{M}\},
\end{equation*}
endowed with the norm
\begin{equation*}
	\|P\|_{\mathcal{P}_{\mathcal{M}}} := \|\check{P}\|_{\mathcal{M}}.
\end{equation*}

\begin{proposition}\label{CHSCHCO}
	Let $m \in \mathbb{N}$, and let $E$ and $F$ be Banach spaces. Then $P \in \mathcal{P_{WDP}}(^mE;F)$ if and only if $\check{P} \in \mathcal{L_{WDP}}(^mE;F)$.
\end{proposition}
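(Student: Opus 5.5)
The easy direction is the restriction to the diagonal. Suppose $\check{P} \in \mathcal{L_{WDP}}(^mE;F)$. Take $(x_j) \subset E$ with $x_j \overset{w}{\to} 0$ and $(f_j) \subset F'$ with $f_j \overset{w}{\to} 0$. Apply the defining property of $\mathcal{L_{WDP}}$ with $x_j^{(1)}=\cdots=x_j^{(m)}=x_j$. This gives $f_j(P(x_j)) = f_j(\check{P}(x_j,\dots,x_j)) \to 0$, so $P \in \mathcal{P_{WDP}}(^mE;F)$.

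For the converse we use the polarization formula, as in the proof of (CH5) in Theorem \ref{DPcoercomp}:
\begin{equation*}
\check{P}(x_1,\dots,x_m) = \frac{1}{2^m m!}\sum_{\epsilon_i=\pm 1} \epsilon_1\cdots\epsilon_m\, P\Big(\sum_{i=1}^m \epsilon_i x_i\Big).
\end{equation*}
Suppose $P \in \mathcal{P_{WDP}}(^mE;F)$. Let $x_j^{(i)} \overset{w}{\to} 0$ for $i=1,\dots,m$, and let $f_j \overset{w}{\to} 0$ in $F'$. Fix one choice of signs $\epsilon=(\epsilon_1,\dots,\epsilon_m)$ and set $z_j^{\epsilon} = \sum_i \epsilon_i x_j^{(i)}$. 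This is a finite linear combination of weakly null sequences, so $z_j^{\epsilon} \overset{w}{\to} 0$. The hypothesis on $P$ then gives $f_j(P(z_j^{\epsilon})) \to 0$ for each of the $2^m$ sign choices. The $f_j$ are linear, so we may apply them to the polarization identity termwise:
\begin{equation*}
f_j\big(\check{P}(x_j^{(1)},\dots,x_j^{(m)})\big) = \frac{1}{2^m m!}\sum_{\epsilon} \epsilon_1\cdots\epsilon_m\, f_j\big(P(z_j^{\epsilon})\big).
\end{equation*}
This is a finite sum of null sequences, hence it tends to $0$. Therefore $\check{P} \in \mathcal{L_{WDP}}(^mE;F)$.

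There is no real obstacle. The only point needing care is that the convergence mode of $(f_j)$ is the same in the hypothesis and the conclusion. Whichever topology "$f_j \overset{w}{\to} 0$" denotes in the definitions, the same sequence $(f_j)$ is used unchanged for every sign pattern, and no property of it beyond linearity is needed. Unlike the Dunford–Pettis case, we do not even need norm estimates: linearity of $f_j$ together with polarization suffices. Note also that the argument uses only weakly null sequences, matching the definitions of $\mathcal{L_{WDP}}$ and $\mathcal{P_{WDP}}$ at the origin, so no translation argument as in the first proposition of Section \ref{DPMOHPEv} is required.
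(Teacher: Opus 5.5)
Your proof is correct and follows the paper's approach. One direction restricts to the diagonal $x_j^{(1)}=\cdots=x_j^{(m)}=x_j$; the converse applies the polarization formula termwise under the linear functionals $f_j$, as in the (CH5) part of Theorem \ref{DPcoercomp}. Your version is in fact more precise, since you write out the full polarization identity (the sum over all $2^m$ sign patterns weighted by $\epsilon_1\cdots\epsilon_m$), which the paper's displayed computation abbreviates.
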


%\begin{proof}
%	Sejam $m \in \mathbb{N}$, $E$ e $F$ espaços de Banach. Suponha que $\check{P} \in \mathcal{L_{WDP}}(^mE; F)$, vamo mostrar que $P \in \mathcal{P_{WDP}}(^mE; F)$. Sejam $(x_j)_{j=1}^{\infty} \subset E$ e $(f_j)_{j=1}^{\infty} \subset F'$ tais que $	x_j^{(i)} \overset{w}{\longrightarrow} 0$ e $f_j \overset{w}{\longrightarrow} 0$, asim
%	\begin{equation*}
%		|f_j\left(P(x_j)\right)| = |f_j\left(\check{P}(x_j,\dots, x_j)\right)| \longrightarrow 0.
%	\end{equation*} 
%Suponde agora que $P \in \mathcal{P_{WDP}}(^mE; F)$, considere $(x_j^{(i)})_{j=1}^{\infty} \subset E$ e $(f_j)_{j=1}^{\infty} \subset F'$ tais que $	x_j^{(i)} \overset{w}{\longrightarrow} 0$ e $f_j \overset{w}{\longrightarrow} 0$, para todo $i=1,\dots, m$. Note que, sendo $\epsilon_i = \pm 1$ então
%\begin{equation*}
%	\epsilon_1 x_j^{(1)} + \cdots + \epsilon_m x_j^{(m)} \overset{w}{\longrightarrow} 0
%\end{equation*}
%e assim
%\begin{align*}
%	|f_j\left(\check{P}\left(x_j^{(1)},\dots, x_j^{(m)}\right)\right)| &= \left|f_j\left(\frac{1}{2^mm!}\check{P}(\epsilon_1x_j^{(1)} + \cdots + \epsilon_mx_j^{(m)})^m\right)\right|\\
%	&= \frac{1}{2^mm!}\left|f_j\left(\check{P}(\epsilon_1x_j^{(1)} + \cdots + \epsilon_mx_j^{(m)})^m\right)\right|\\
%	&= \frac{1}{2^mm!}\left|f_j\left(P(\epsilon_1x_j^{(1)} + \cdots + \epsilon_mx_j^{(m)})\right)\right| \longrightarrow 0.
%\end{align*}
%quando $j \rightarrow \infty$. Portanto, $\check{P} \in \mathcal{L_{WDP}}(^mE; F)$.
%\end{proof}

As an immediate consequence of Proposition \ref{CHSCHCO}, we have
\begin{equation*}
	\mathcal{P_{WDP}}(^mE;F) = \mathcal{P}_{\mathcal{L_{WDP}}}(^mE;F)
\end{equation*}
for every $m \in \mathbb{N}$ and all Banach spaces $E$ and $F$.

Another immediate consequence of Proposition \ref{CHSCHCO} is that the pair $(\mathcal{P_{WDP}}(^mE;F), \|\cdot\|)$ is a Banach ideal of homogeneous polynomials.

The result in Proposition \ref{CHSCHCO} corresponds to condition $(CH5)$ in the definitions of coherent and compatible, and strongly coherent and compatible pairs (see \cite{PR14, RST22}). This naturally leads to the question of whether the sequence of pairs $\left(\mathcal{L_{WDP}}, \mathcal{P_{WDP}}\right)$ is strongly coherent and strongly compatible with $\mathcal{WDP}$.

\begin{proposition}
	Let $m,n \in \mathbb{N}$, let $E_1,\dots,E_{m+n}$ and $F$ be Banach spaces. If $T \in \mathcal{L_{WDP}}(E_1,\dots,E_m;F)$ and $Q \in \mathcal{L}(E_{m+1},\dots,E_{m+n})$, then
	\[
	QT \in \mathcal{L_{WDP}}(E_1,\dots,E_{m+n};F).
	\]
\end{proposition}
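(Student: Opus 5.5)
The plan is to check the defining condition of $\mathcal{L_{WDP}}$ directly, using the fact that $QT$ factors as a scalar times $T$. Recall that
\[
QT(x_1,\dots,x_{m+n}) = Q(x_{m+1},\dots,x_{m+n})\,T(x_1,\dots,x_m).
\]
This is a product of two continuous multilinear maps in separate blocks of variables, so $QT \in \mathcal{L}(E_1,\dots,E_{m+n};F)$ and $\|QT\| \le \|Q\|\,\|T\|$. It therefore only remains to verify the weakly Dunford--Pettis condition.

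Fix sequences $(x_j^{(i)})_{j=1}^{\infty} \subset E_i$ with $x_j^{(i)} \overset{w}{\to} 0$ for $i=1,\dots,m+n$, and $(f_j)_{j=1}^{\infty} \subset F'$ with $f_j \overset{w}{\to} 0$. By linearity of $f_j$ and since $Q$ is scalar valued,
\[
f_j\big(QT(x_j^{(1)},\dots,x_j^{(m+n)})\big) = Q(x_j^{(m+1)},\dots,x_j^{(m+n)})\; f_j\big(T(x_j^{(1)},\dots,x_j^{(m)})\big).
\]
The second factor tends to $0$ because $T \in \mathcal{L_{WDP}}(E_1,\dots,E_m;F)$ and the first $m$ sequences are weakly null. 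For the first factor we only need boundedness. Weakly convergent sequences are bounded, so there is $C>0$ with $\|x_j^{(i)}\| \le C$ for all $i,j$. Hence
\[
|Q(x_j^{(m+1)},\dots,x_j^{(m+n)})| \le \|Q\|\,C^n .
\]
A bounded sequence of scalars times a null sequence is null, so $f_j(QT(x_j^{(1)},\dots,x_j^{(m+n)})) \to 0$, which is exactly $QT \in \mathcal{L_{WDP}}(E_1,\dots,E_{m+n};F)$.

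There is no serious obstacle here. The only point worth stressing is the contrast with the Remark following Theorem \ref{DPcoercomp}, where the analogous statement fails for $\mathcal{L_{DP}^{\mathrm{ev}}}$. In that setting the limit points $a_i$ are nonzero, so the factor coming from $T$ need not vanish. Here all sequences are weakly null, so the factor $f_j(T(\cdots))$ tends to zero. Consequently the extra variables carried by $Q$ (for instance $Q(e_j,e_j)=1$ in $\ell_2$) can contribute at most a bounded factor and cannot destroy the convergence.
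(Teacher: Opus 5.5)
Your proof is correct and follows the paper's argument essentially verbatim. Both factor $f_j(QT(\cdots))$ as the bounded scalar $Q(x_j^{(m+1)},\dots,x_j^{(m+n)})$ times $f_j(T(x_j^{(1)},\dots,x_j^{(m)}))\to 0$, and you are only slightly more explicit than the paper's ``since $Q$ is continuous'' in bounding the $Q$-factor by $\|Q\|C^n$.
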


\begin{proof}
	Let $(x_j^{(i)})_{j=1}^{\infty} \subset E_i$, $i=1,\dots,m+n$, and $(f_j)_{j=1}^{\infty} \subset F'$ be such that $x_j^{(i)} \overset{w}{\to} 0$ for all $i=1,\dots,m+n$ and $f_j \overset{w}{\to} 0$.
	
	Since $Q$ is continuous, the sequence $\big(Q(x_j^{(m+1)},\dots,x_j^{(m+n)})\big)_{j=1}^{\infty}$ is bounded. Moreover, since $T \in \mathcal{L_{WDP}}(E_1,\dots,E_m;F)$, we have
	\[
	f_j\big(T(x_j^{(1)},\dots,x_j^{(m)})\big) \longrightarrow 0.
	\]
	Therefore,
	\[
	\big|f_j\big(QT(x_j^{(1)},\dots,x_j^{(m+n)})\big)\big|
	= \big|Q(x_j^{(m+1)},\dots,x_j^{(m+n)})\big|
	\, \big|f_j\big(T(x_j^{(1)},\dots,x_j^{(m)})\big)\big|
	\longrightarrow 0,
	\]
	which shows that $QT \in \mathcal{L_{WDP}}(E_1,\dots,E_{m+n};F)$.
	
	The norm estimate
	\[
	\|QT\| \le \|Q\| \, \|T\|
	\]
	is immediate.
\end{proof}

Using similar arguments, we obtain the following result.

\begin{proposition}\label{CH3CHCO}
	Let $m,n \in \mathbb{N}$ and let $E$ and $F$ be Banach spaces. If $P \in \mathcal{P_{WDP}}(^mE;F)$ and $Q \in \mathcal{P}(^nE)$, then
	\[
	QP \in \mathcal{P_{WDP}}(^{m+n}E;F).
	\]
\end{proposition}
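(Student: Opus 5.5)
The plan is to mimic the proof of the preceding proposition, with polynomials in place of multilinear maps. Fix $P \in \mathcal{P_{WDP}}(^mE;F)$ and $Q \in \mathcal{P}(^nE)$. Recall that $QP$ denotes the polynomial $x \mapsto Q(x)P(x)$. It lies in $\mathcal{P}(^{m+n}E;F)$, with generating $(m+n)$-linear map $(x_1,\dots,x_{m+n}) \mapsto \check{Q}(x_{m+1},\dots,x_{m+n})\,\check{P}(x_1,\dots,x_m)$. Hence only the weakly Dunford--Pettis condition needs checking.

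Take $(x_j)_{j=1}^{\infty} \subset E$ with $x_j \overset{w}{\to} 0$ and $(f_j)_{j=1}^{\infty} \subset F'$ with $f_j \overset{w}{\to} 0$. Weakly convergent sequences are bounded, so there is $C>0$ with $\|x_j\| \le C$ for all $j$. Consequently
\[
|Q(x_j)| \le \|Q\|\,\|x_j\|^n \le \|Q\|\,C^n .
\]
Since $P \in \mathcal{P_{WDP}}(^mE;F)$, we have $f_j(P(x_j)) \to 0$. Because $Q(x_j)$ is a scalar, linearity of $f_j$ gives
\[
|f_j((QP)(x_j))| = |Q(x_j)|\,|f_j(P(x_j))| \le \|Q\|\,C^n\,|f_j(P(x_j))| \longrightarrow 0 .
\]
This is precisely the defining condition for $QP \in \mathcal{P_{WDP}}(^{m+n}E;F)$. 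The norm estimate $\|QP\| \le \|Q\|\,\|P\|$ is immediate from the supremum norm.

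There is no genuine obstacle. The only point needing care is that the scalar factor $Q(x_j)$ must be pulled out of $f_j$ and controlled by boundedness alone. Weak nullity of $(x_j)$ does not force $Q(x_j)\to 0$, and we do not need it to.

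Alternatively, one could pass through Proposition \ref{CHSCHCO}: show $\check{P}$ is weakly Dunford--Pettis, apply the previous proposition to $\check{Q}\check{P}$, symmetrize, and conclude again by Proposition \ref{CHSCHCO}. This route requires knowing that $\mathcal{L_{WDP}}$ is closed under symmetrization and permutation of variables, which holds with an argument like Proposition \ref{DPS}. The direct argument above is shorter, so I would use it.
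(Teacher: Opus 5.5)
Your proof is correct and follows the approach the paper intends. The paper gives no proof beyond ``using similar arguments'' to the preceding multilinear proposition: bound the scalar factor $Q(x_j)$ using boundedness of the weakly null sequence, and use $f_j(P(x_j))\to 0$, which is what you do (your check that $QP$ is a continuous $(m+n)$-homogeneous polynomial via $\check{Q}\check{P}$ is the only detail the paper leaves implicit).
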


\begin{remark}\label{WDP(CH1)}
	Although the class $\mathcal{L_{WDP}}$ satisfies conditions $(CH3)$ and $(CH5)$, as shown in Propositions \ref{CHSCHCO} and \ref{CH3CHCO}, it does not satisfy condition $(CH1)$.
	
	Indeed, let $E$ be a Banach space and let $a \in E \setminus \{0\}$. By the Hahn--Banach Theorem, there exists $\varphi \in E'$ such that $|\varphi(a)| = 1$. Define $T\colon E \times \ell_2 \to \ell_2$ by
	\[
	T(x,(x_j)_{j=1}^{\infty}) = (\varphi(x)x_j)_{j=1}^{\infty}.
	\]
	It is immediate that $T$ is a continuous bilinear mapping.
	
	Let $x_j \overset{w}{\to} 0$ in $E$, $y_j \overset{w}{\to} 0$ in $\ell_2$, and $f_j \overset{w}{\to} 0$ in $(\ell_2)'$. Then $\varphi(x_j) \to 0$, while the sequences $(y_j)_{j=1}^{\infty}$ and $(f_j)_{j=1}^{\infty}$ are bounded in $\ell_2$ and $(\ell_2)'$, respectively. Therefore,
	\[
	|f_j(T(x_j,y_j))| \longrightarrow 0,
	\]
	which shows that $T \in \mathcal{L_{WDP}}(E,\ell_2;\ell_2)$.
	
	However, $T_a \notin \mathcal{WDP}(\ell_2;\ell_2)$. Indeed, let $(e_j)_{j=1}^{\infty}$ be the canonical basis of $\ell_2$. Since $(\ell_2)'$ is isometrically isomorphic to $\ell_2$, we may identify each $f_j \in (\ell_2)'$ with $e_j$. Writing $e_j = (\delta_{n,j})_{n=1}^{\infty}$, we obtain
	\[
	|f_j(T_a(e_j))| = |f_j(T(a,e_j))|
	= \left|f_j\big((\varphi(a)\delta_{n,j})_{n=1}^{\infty}\big)\right|
	= |\varphi(a)| \|e_j\|_2
	= |\varphi(a)| = 1,
	\]
	for all $j \in \mathbb{N}$.
	
	Thus, $(T_a(e_j))_{j=1}^{\infty}$ does not converge to zero in $\ell_2$, and hence $T_a$ does not satisfy condition $(CH1)$.
\end{remark}

It is worth noting that, by Theorem \ref{DPEVS}, if $E_i$ has the Schur property for all $i=1,\dots,m$, then
\[
\mathcal{L_{DP}^{\mathrm{ev}}}(E_1,\dots,E_m;F)
= \mathcal{L_{DP}}(E_1,\dots,E_m;F)
= \mathcal{L_{WDP}}(E_1,\dots,E_m;F).
\]

Let us now introduce the concept of weakly Dunford–Pettis multilinear operators at every point, following the same ideas as in Section \ref{DPMOHPEv}.

\begin{definition}\label{WDPML}
	Let $m \in \mathbb{N}$, let $E_1,\dots,E_m$ and $F$ be Banach spaces, and let $T \in \mathcal{L}(E_1,\dots,E_m;F)$. We say that $T$ is a weakly Dunford–Pettis $m$-linear operator at $(a_1,\dots,a_m) \in E_1 \times \cdots \times E_m$ if for every sequence $(x_j^{(i)})_{j=1}^{\infty} \subset E_i$, $i=1,\dots,m$, and every sequence $(f_j)_{j=1}^{\infty} \subset F'$ such that
	\[
	x_j^{(i)} \overset{w}{\longrightarrow} a_i \quad \text{for all } i=1,\dots,m,
	\quad \text{and} \quad
	f_j \overset{w}{\longrightarrow} f \in F',
	\]
	we have
	\[
	f_j\big(T(x_j^{(1)},\dots,x_j^{(m)})\big)
	\longrightarrow
	f\big(T(a_1,\dots,a_m)\big)
	\]
	as $j \to \infty$.
\end{definition}

When $T$ is a weakly Dunford–Pettis multilinear operator at every point of $E_1 \times \cdots \times E_m$, we say that $T$ is weakly Dunford–Pettis at every point and denote the corresponding class by $\mathcal{L_{WDP}^{\mathrm{ev}}}$.

\begin{definition}\label{WDPPH}
	Let $m \in \mathbb{N}$, let $E$ and $F$ be Banach spaces, and let $P \in \mathcal{P}(^mE;F)$. We say that $P$ is a weakly Dunford–Pettis $m$-homogeneous polynomial at $a \in E$ if for every sequence $(x_j)_{j=1}^{\infty} \subset E$ and every sequence $(f_j)_{j=1}^{\infty} \subset F'$ such that
	\[
	x_j \overset{w}{\longrightarrow} a
	\quad \text{and} \quad
	f_j \overset{w}{\longrightarrow} f \in F',
	\]
	we have
	\[
	f_j\big(P(x_j)\big) \longrightarrow f\big(P(a)\big)
	\]
	as $j \to \infty$.
\end{definition}

When $P$ is a weakly Dunford–Pettis homogeneous polynomial at every point of $E$, we say that $P$ is weakly Dunford–Pettis at every point, and we denote the class of all such polynomials by $\mathcal{P_{WDP}^{\mathrm{ev}}}$.

We now present a characterization of weakly Dunford-Pettis multilinear operators at every point, analogous to the Dunford-Pettis case.

\begin{proposition}
	Let $E_1,\dots, E_m$ and $F$ be Banach spaces and $T \in \mathcal{L}(E_1,\dots, E_m; F)$. Then, $T \in \mathcal{L_{WDP}^{\text{ev}}}(E_1,\dots, E_m; F)$ if, and only if, for every $(a_1,\dots, a_m) \in E_1\times\cdots\times E_m$, every family of weakly null sequences $(x_j^{(i)})_{j=1}^{\infty} \subset E_i$, $i=1,\dots,m$, and every sequence $(f_j)_{j=1}^{\infty} \subset F'$ with $f_j \overset{w}{\longrightarrow} f$, we have
	\begin{equation*}
		f_j\big(T(x_j^{(1)} + a_1,\dots, x_j^{(m)} + a_m)\big)
		\longrightarrow
		f\big(T(a_1,\dots, a_m)\big).
	\end{equation*}
\end{proposition}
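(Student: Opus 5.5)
The plan is to mirror the proof of the analogous characterization for $\mathcal{L_{DP}^{\mathrm{ev}}}$ given at the beginning of Section \ref{DPMOHPEv}. The argument is a translation between sequences converging weakly to $a_i$ and weakly null sequences, using the fact that the weak topology is a linear (vector space) topology. Translation by a fixed vector is weakly continuous. Hence $x_j^{(i)}\overset{w}{\to}0$ if and only if $x_j^{(i)}+a_i\overset{w}{\to}a_i$.

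For the direct implication, assume $T\in\mathcal{L_{WDP}^{\mathrm{ev}}}(E_1,\dots,E_m;F)$. Fix $(a_1,\dots,a_m)$, weakly null sequences $(x_j^{(i)})$, and $(f_j)\subset F'$ with $f_j\overset{w}{\to}f$. Set $z_j^{(i)}=x_j^{(i)}+a_i$. For every $\varphi\in E_i'$ we have $\varphi(z_j^{(i)})=\varphi(x_j^{(i)})+\varphi(a_i)\to\varphi(a_i)$, so $z_j^{(i)}\overset{w}{\to}a_i$ for each $i$. Applying Definition \ref{WDPML} at the point $(a_1,\dots,a_m)$ to the sequences $(z_j^{(i)})$ and $(f_j)$ gives
\[
f_j\big(T(x_j^{(1)}+a_1,\dots,x_j^{(m)}+a_m)\big)\longrightarrow f\big(T(a_1,\dots,a_m)\big).
\]

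For the converse, assume the stated condition. Let $(a_1,\dots,a_m)$ be arbitrary, let $x_j^{(i)}\overset{w}{\to}a_i$, and let $f_j\overset{w}{\to}f$. Define $y_j^{(i)}=x_j^{(i)}-a_i$, which is weakly null by the same functional-by-functional computation. Since $x_j^{(i)}=y_j^{(i)}+a_i$, we get
\[
f_j\big(T(x_j^{(1)},\dots,x_j^{(m)})\big)=f_j\big(T(y_j^{(1)}+a_1,\dots,y_j^{(m)}+a_m)\big)\longrightarrow f\big(T(a_1,\dots,a_m)\big)
\]
by hypothesis. Thus $T$ is weakly Dunford–Pettis at $(a_1,\dots,a_m)$, and since the point was arbitrary, $T\in\mathcal{L_{WDP}^{\mathrm{ev}}}(E_1,\dots,E_m;F)$.

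There is no genuine obstacle. The only point needing care is that the convergence mode of $(f_j)$ is left unchanged in both directions. We never estimate $f_j$ against anything, so boundedness of $(f_j)$ or the relation between weak and weak$^*$ convergence in $F'$ plays no role. The argument simply substitutes sequences into the same defining condition, so it works whichever of these convergence modes is intended.
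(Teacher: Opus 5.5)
Your proof is correct and follows the paper's argument. In the forward direction you note $x_j^{(i)}+a_i \overset{w}{\to} a_i$ and apply the definition at $(a_1,\dots,a_m)$; in the converse you set $y_j^{(i)}=x_j^{(i)}-a_i$ and invoke the hypothesis, with the convergence of $(f_j)$ carried through unchanged, exactly as in the paper.
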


\begin{proof}
	Suppose first that $T \in \mathcal{L_{WDP}^{\text{ev}}}(E_1,\dots, E_m; F)$.
	
	Let $(a_1,\dots,a_m)\in E_1\times\cdots\times E_m$, let $(x_j^{(i)})$ be weakly null sequences in $E_i$, and let $(f_j) \subset F'$ be such that $f_j \overset{w}{\longrightarrow} f$. Then
	\[
	x_j^{(i)} + a_i \overset{w}{\longrightarrow} a_i
	\]
	for all $i=1,\dots,m$. Since $T$ is weakly Dunford-Pettis at every point, it follows that
	\[
	f_j\big(T(x_j^{(1)} + a_1,\dots, x_j^{(m)} + a_m)\big)
	\longrightarrow
	f\big(T(a_1,\dots, a_m)\big).
	\]
	
	Conversely, suppose the condition holds. Let $(x_j^{(i)}) \subset E_i$ and $(f_j) \subset F'$ be such that
	\[
	x_j^{(i)} \overset{w}{\longrightarrow} a_i
	\quad \text{and} \quad
	f_j \overset{w}{\longrightarrow} f.
	\]
	Define $y_j^{(i)} = x_j^{(i)} - a_i$, so that $y_j^{(i)} \overset{w}{\longrightarrow} 0$. Then
	\[
	f_j\big(T(x_j^{(1)},\dots,x_j^{(m)})\big)
	=
	f_j\big(T(y_j^{(1)} + a_1,\dots,y_j^{(m)} + a_m)\big).
	\]
	By hypothesis,
	\[
	f_j\big(T(y_j^{(1)} + a_1,\dots,y_j^{(m)} + a_m)\big)
	\longrightarrow
	f\big(T(a_1,\dots,a_m)\big).
	\]
	Therefore, $T \in \mathcal{L_{WDP}^{\text{ev}}}(E_1,\dots,E_m;F)$.
\end{proof}

%This characterization shows that weakly Dunford-Pettis multilinear operators at every point are completely determined by their behavior on weakly null perturbations.

Following the same line of reasoning as in Proposition \ref{DPCH1}, and denoting by $\mathcal{L_{WDP}^{\mathrm{(CH1)}}}$ the class of all weakly Dunford-Pettis multilinear operators between Banach spaces that satisfy condition $(CH1)$, we obtain the following result.

\begin{proposition}\label{WDPCH1}
	Let $E_1,\dots,E_m$ and $F$ be Banach spaces. Then,
	\[
	\mathcal{L_{WDP}^{\mathrm{(CH1)}}} = \mathcal{L_{WDP}^{\mathrm{ev}}}.
	\]
\end{proposition}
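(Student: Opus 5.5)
The plan is to follow the pattern of Theorem \ref{DPCH1}: prove the two inclusions separately. The inclusion $\mathcal{L_{WDP}^{\mathrm{ev}}} \subset \mathcal{L_{WDP}^{\mathrm{(CH1)}}}$ is handled as in Theorem \ref{DPcoercomp}. The reverse inclusion uses the multilinear expansion of $T(x_j^{(1)},\dots,x_j^{(m)})$ around $(a_1,\dots,a_m)$, written in terms of the weakly null differences $y_j^{(i)} = x_j^{(i)} - a_i$.

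For $\mathcal{L_{WDP}^{\mathrm{ev}}} \subset \mathcal{L_{WDP}^{\mathrm{(CH1)}}}$, I first observe that $\mathcal{L_{WDP}^{\mathrm{ev}}} \subset \mathcal{L_{WDP}}$: take $a_i=0$ and $f=0$ in Definition \ref{WDPML}. Next, for $T \in \mathcal{L_{WDP}^{\mathrm{ev}}}$ and $a_i \in E_i$, the partial map $T_{a_i}$ is again in $\mathcal{L_{WDP}^{\mathrm{ev}}}$. To see this, insert the constant sequence $a_i$ in the $i$-th slot, exactly as in the (CH1) part of Theorem \ref{DPcoercomp}; the bound $\|T_{a_i}\| \le \|T\|\|a_i\|$ is immediate. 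Iterating down to linear maps shows that every partial map obtained by fixing coordinates is weakly Dunford--Pettis, which is condition (CH1).

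For the reverse inclusion, I take $T \in \mathcal{L_{WDP}^{\mathrm{(CH1)}}}$, sequences $x_j^{(i)} \overset{w}{\to} a_i$, and $f_j \overset{w}{\to} f$, and write
\[
f_j\big(T(x_j^{(1)},\dots,x_j^{(m)})\big) - f\big(T(a_1,\dots,a_m)\big)
= (f_j - f)\big(T(x_j^{(1)},\dots,x_j^{(m)})\big) + f\big(T(x_j^{(1)},\dots,x_j^{(m)}) - T(a_1,\dots,a_m)\big).
\]
I then expand $T(y_j^{(1)}+a_1,\dots,y_j^{(m)}+a_m)$ into $2^m$ terms, one for each subset $A \subset \{1,\dots,m\}$ of free coordinates, as in the $m=3$ display of Theorem \ref{DPCH1}.

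In the first summand, $f_j - f$ is weakly null and every term with $A \neq \emptyset$ is a partial map $T_{(a_i)_{i\notin A}}$ evaluated at weakly null sequences. By (CH1) these partial maps are weakly Dunford--Pettis, so those terms tend to $0$. The term with $A=\emptyset$ is $(f_j-f)(T(a_1,\dots,a_m))$, which tends to $0$ by weak convergence. In the second summand, every term with $|A|=1$ is $f$ applied to a bounded linear operator evaluated at a weakly null sequence, so it tends to $0$.

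The main obstacle is the second summand for terms with $|A|\ge 2$, namely $f\big(T_{(a_i)_{i\notin A}}(y_j^{(i)})_{i\in A}\big)$. Here the functional $f$ is fixed rather than weakly null, so the weakly Dunford--Pettis property from (CH1) gives no control. I would first try to absorb $f$ into a weakly null sequence of functionals, or to use the scalar multilinear form $f\circ T_{(a_i)_{i\notin A}}$ directly.

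I suspect this step fails without extra hypotheses. Take $T\colon \ell_2\times\ell_2\to\mathbb{K}$ to be the inner product. It is trivially in $\mathcal{L_{WDP}}$, because weakly null sequences in $\mathbb{K}'=\mathbb{K}$ are norm null, and all of its partial maps are linear functionals, hence weakly Dunford--Pettis. Yet with $f_j=f=1$ we get $T(e_j,e_j)=1\not\to 0=T(0,0)$. So before completing the argument I would check this example against the precise form of (CH1) in \cite{PR14}. If it stands, the reverse inclusion needs an additional assumption, for instance that the scalar forms $f\circ T_{(a_i)_{i\notin A}}$ are weakly sequentially continuous, or that the $E_i$ have the Schur property as in Theorem \ref{DPEVS}; under such an assumption the expansion above closes the proof.
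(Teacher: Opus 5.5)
Your suspicion is correct, and you should state it as a disproof rather than a caveat. The proposition is false as stated. Only the inclusion $\mathcal{L_{WDP}^{\mathrm{ev}}}\subset\mathcal{L_{WDP}^{\mathrm{(CH1)}}}$ survives, and you prove it correctly.

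Your example does stand once you check it against Pellegrino and Ribeiro's (CH1). That condition only asks that the partial maps $T_{a_j}$ lie in the class of one degree lower, with $\|T_{a_j}\|\le C\|T\|\|a_j\|$. When $F=\mathbb{K}$, a weakly null sequence in $F'=\mathbb{K}$ is just a null sequence of scalars. So every multilinear form, and every partial map of it, is weakly Dunford--Pettis, which gives $\mathcal{L_{WDP}^{\mathrm{(CH1)}}}(E_1,\dots,E_m;\mathbb{K})=\mathcal{L}(E_1,\dots,E_m;\mathbb{K})$.

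On the other hand, take $f_j=f$ constant in Definition~\ref{WDPML}. This shows that every $T\in\mathcal{L_{WDP}^{\mathrm{ev}}}$ sends $x_j^{(i)}\overset{w}{\to}a_i$ to $T(x_j^{(1)},\dots,x_j^{(m)})\overset{w}{\to}T(a_1,\dots,a_m)$. For $F=\mathbb{K}$ this is weak sequential continuity. The form $Q(x,y)=\sum_n x_ny_n$ on $\ell_2\times\ell_2$ fails it at the origin, since $Q(e_j,e_j)=1$. Use this bilinear form rather than ``the inner product'', which is sesquilinear over $\mathbb{C}$.

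The paper's own results confirm the failure. Since $\mathbb{K}$ is reflexive, the proposition combined with Theorem~\ref{WDP=WDP} would put $Q$ in $\mathcal{L_{DP}^{\mathrm{ev}}}\subset\mathcal{L_{DP}}$, which contradicts $Q(e_j,e_j)=1$.

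The paper offers no separate argument, only ``the same line of reasoning as in Theorem~\ref{DPCH1}''. That transfer breaks exactly at the terms you isolated. In Theorem~\ref{DPCH1}, every term of the expansion is a partial map evaluated at weakly null sequences and only needs to tend to $0$ in norm, which (CH1) supplies. Here the target is $f(T(a_1,\dots,a_m))$ with $f$ arbitrary. The terms $f\big(T_{(a_i)_{i\notin A}}((y_j^{(i)})_{i\in A})\big)$ with $|A|\ge 2$ cannot be controlled by a condition that only tests against weakly null functionals. For $m=1$ no such terms occur, and the two classes do coincide.

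Your decomposition in fact proves the correct replacement: $T\in\mathcal{L_{WDP}^{\mathrm{ev}}}$ if and only if $T\in\mathcal{L_{WDP}^{\mathrm{(CH1)}}}$ and $T$ is weak-to-weak sequentially continuous in the sense above.
\begin{itemize}
\item Necessity is the case $f_j=f$ together with your first inclusion.
\item For sufficiency, the summand $(f_j-f)\big(T(x_j^{(1)},\dots,x_j^{(m)})\big)$ is handled term by term by (CH1), exactly as you do.
\item The remaining summand $f\big(T(x_j^{(1)},\dots,x_j^{(m)})-T(a_1,\dots,a_m)\big)$ tends to $0$ by precisely the extra hypothesis.
\end{itemize}
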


Using arguments analogous to those in Section \ref{DPML}, the following results can be established.

\begin{theorem}
	\begin{itemize}
		\item[(a)] The class $(\mathcal{L_{WDP}^{\mathrm{ev}}}, \|\cdot\|)$ is a symmetric Banach multi-ideal.
		
		\item[(b)] The class $(\mathcal{P_{WDP}^{\mathrm{ev}}}, \|\cdot\|)$ is a Banach ideal of homogeneous polynomials.
		
		\item[(c)] The sequence $(\mathcal{L_{WDP}^{\mathrm{ev,n}}}, \mathcal{P_{WDP}^{\mathrm{ev,n}}})_{n=1}^{\infty}$ is coherent and compatible with $\mathcal{WDP}$.
	\end{itemize}
\end{theorem}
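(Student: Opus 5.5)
I will mirror Section \ref{DPMOHPEv} item by item, replacing norm convergence $T(x_j^{(1)},\dots,x_j^{(m)})\to T(a_1,\dots,a_m)$ by the scalar convergence $f_j(T(x_j^{(1)},\dots,x_j^{(m)}))\to f(T(a_1,\dots,a_m))$ whenever $x_j^{(i)}\overset{w}{\to}a_i$ and $f_j\overset{w}{\to}f$.

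For (a), I first check the multi-ideal axioms. The class contains finite-type operators: for $T=\varphi_1\cdots\varphi_m\,y$ we have $f_j(T(x_j^{(1)},\dots,x_j^{(m)}))=\varphi_1(x_j^{(1)})\cdots\varphi_m(x_j^{(m)})f_j(y)$, and each factor converges. It is a linear subspace by additivity of limits. For the ideal property, take $u\in\mathcal{L}(G;F)$, $T\in\mathcal{L_{WDP}^{\mathrm{ev}}}(E_1,\dots,E_m;G)$ and $v_i\in\mathcal{L}(H_i;E_i)$. Then $v_i(x_j^{(i)})\overset{w}{\to}v_i(a_i)$, and $u'(f_j)\overset{w}{\to}u'(f)$ in $G'$, because $u'$ is weakly-to-weakly continuous. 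Hence $f_j(uT(v_1x_j^{(1)},\dots))=(u'f_j)(T(\dots))\to (u'f)(T(v_1a_1,\dots))$. Symmetry is proved exactly as in Proposition \ref{DPS}, by permuting the sequences and averaging.

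The main obstacle is closedness in the uniform norm. I take $T_k\to T$ and use that weakly convergent sequences are bounded: $\|f_j\|\le K$ and $\prod_i\|x_j^{(i)}\|\le C$. Then
\[
|f_j(T(x_j))-f(T(a))|\le KC\|T-T_k\|+|f_j(T_k(x_j))-f(T_k(a))|+\|f\|\,\|T-T_k\|\,\|a_1\|\cdots\|a_m\|,
\]
where $x_j$ and $a$ abbreviate the $m$-tuples. The first and third terms are made small by choosing $k$, and the middle term then tends to $0$ as $j\to\infty$. Since $\mathcal{L}(E_1,\dots,E_m;F)$ is complete, this gives a Banach multi-ideal with the sup norm.

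For (b) and (c), the key step is (CH5)/(CP5), namely $P\in\mathcal{P_{WDP}^{\mathrm{ev}}}\iff\check P\in\mathcal{L_{WDP}^{\mathrm{ev}}}$. The direction from $\check P$ to $P$ follows from the diagonal. For the converse, I apply $f_j$ to the polarization formula: each $\sum_i\epsilon_i x_j^{(i)}$ converges weakly to $\sum_i\epsilon_i a_i$, and the formula is a finite linear combination, so the limit passes through. This also yields (b), since $\mathcal{P_{WDP}^{\mathrm{ev}}}=\mathcal{P}_{\mathcal{L_{WDP}^{\mathrm{ev}}}}$.

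(CH1) is immediate by inserting constant sequences. For (CH2), multiplication by $\gamma\in E_{m+1}'$, I use the splitting
\[
f_j(\gamma T(x_j))=\gamma(x_j^{(m+1)})\,f_j(T(x_j^{(1)},\dots,x_j^{(m)})),
\]
a product of two convergent scalar sequences. The polynomial analogues follow the same pattern. Finally, I conclude coherence and compatibility with $\mathcal{WDP}$ via \cite[Remark 2.3(3) and Proposition 2.4]{RST22}, together with symmetry, as in Theorem \ref{DPcoercomp}. The case $n=1$ needs care: a linear map $u$ that is WDP at every point still satisfies $u\in\mathcal{WDP}$, because taking $a=0$ and $f=0$ recovers the defining condition of $\mathcal{WDP}$. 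I would double-check whether the converse holds for linear maps, since full coherence at level $1$ seems to require the two classes to coincide there.
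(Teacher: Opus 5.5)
Your proposal is correct and follows the route the paper intends. The paper gives no separate argument, saying only that the Section~2 proofs carry over. You transfer them item by item: closedness by the three-term estimate, symmetry by permuting and averaging, (CH5) by applying $f_j$ to the full signed polarization sum, (CH1) with constant sequences, multiplication by $\gamma$ as a product of two convergent scalar sequences, and then the criterion from RST22.

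The one step you leave open is genuinely needed, however. Compatibility with $\mathcal{WDP}$ requires the level-one class to equal $\mathcal{WDP}$. In particular, the inclusion $\mathcal{WDP}(E;F)\subset\mathcal{L_{WDP}^{\mathrm{ev}}}(E;F)$ is exactly what lets you start from an arbitrary $u\in\mathcal{WDP}$ and produce $\gamma_1\cdots\gamma_{k-1}u$ by iterating the multiplication property.

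This inclusion does hold. Take $u\in\mathcal{WDP}(E;F)$, $x_j\overset{w}{\to}a$ in $E$ and $f_j\overset{w}{\to}f$ in $F'$. Then
\[
f_j(u(x_j))-f(u(a))=(u'f)(x_j-a)+(f_j-f)(u(a))+(f_j-f)\big(u(x_j-a)\big).
\]
Each term tends to $0$:
\begin{itemize}
\item the first because $x_j-a$ is weakly null;
\item the second because weak convergence in $F'$ implies weak$^*$ convergence;
\item the third because $u\in\mathcal{WDP}$ is applied to the weakly null sequences $(x_j-a)$ and $(f_j-f)$.
\end{itemize}
Combined with your observation (take $a=0$, $f=0$), this gives $\mathcal{L_{WDP}^{\mathrm{ev}}}(E;F)=\mathcal{WDP}(E;F)$ and closes the argument.

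A minor labelling point: in the numbering the paper uses, multiplication by $\gamma$ is (CH3), not (CH2). (CH2) is the polynomial condition on $P_a$, which you obtain from (CH1) and (CH5).
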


%\begin{remark}
%	\begin{itemize}
%		\item[(a)] A classe $(\mathcal{L_{WDP}^{\text{ev}}}, \|\cdot\|)$ não é um Hiper-ideal, segundo \cite{BT15}, como mostra o seguinte exemplo (\textcolor{red}{apresentar exemplo}).
%		\item[(b)]  A sequência $(\mathcal{L_{WDP}^{\text{ev}}}_n, \mathcal{P_{WDP}^{\text{ev}}}_n)_{n=1}^{\infty}$ não é fortemente coerente, como mostra o seguinte exemplo (\textcolor{red}{apresentar exemplo}).
%	\end{itemize}
%\end{remark}

\begin{proposition}\label{DPEvsubWDPEv}
	Let $m \in \mathbb{N}$, and let $E_1,\dots,E_m$ and $F$ be Banach spaces. Then,
	\[
	\mathcal{L_{DP}^{\mathrm{ev}}}(E_1,\dots,E_m;F)
	\subset
	\mathcal{L_{WDP}^{\mathrm{ev}}}(E_1,\dots,E_m;F).
	\]
\end{proposition}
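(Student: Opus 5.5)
Let $T \in \mathcal{L_{DP}^{\mathrm{ev}}}(E_1,\dots,E_m;F)$ and fix a point $(a_1,\dots,a_m)$. Take sequences $x_j^{(i)} \overset{w}{\to} a_i$ in $E_i$ and $(f_j) \subset F'$ with $f_j \overset{w}{\to} f$, in the sense used in Definition \ref{WDPML}. We will show directly that
\[
f_j\big(T(x_j^{(1)},\dots,x_j^{(m)})\big) \to f\big(T(a_1,\dots,a_m)\big).
\]
Writing $y_j = T(x_j^{(1)},\dots,x_j^{(m)})$ and $y = T(a_1,\dots,a_m)$, we use the splitting
\[
f_j(y_j) - f(y) = f_j(y_j - y) + (f_j - f)(y).
\]

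For the first term, since $T$ is Dunford--Pettis at $(a_1,\dots,a_m)$, we have $\|y_j - y\| \to 0$. The sequence $(f_j)$ is weakly (or weak$^*$) convergent, so it is bounded by the uniform boundedness principle; say $\|f_j\| \le K$ for all $j$. Then
\[
|f_j(y_j - y)| \le K\|y_j - y\| \to 0 .
\]

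For the second term, $(f_j - f)(y) \to 0$ because $f_j \to f$ weakly in $F'$. This implies weak$^*$ convergence, that is, convergence after evaluation at the fixed vector $y \in F$ regarded as an element of $F''$. Combining the two estimates gives the required convergence.

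Since the point $(a_1,\dots,a_m)$ was arbitrary, $T \in \mathcal{L_{WDP}^{\mathrm{ev}}}(E_1,\dots,E_m;F)$. The only step needing any care is the boundedness of $(f_j)$, and this holds under either reading of the convergence $f_j \to f$ (weak or weak$^*$) by Banach--Steinhaus, since $F$ is complete. There is no real obstacle; the argument mirrors the proof of Proposition \ref{DPWDP}, with the extra term $(f_j - f)(y)$ added to handle the nonzero limit.
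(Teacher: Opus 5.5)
Your proof is correct and is essentially the paper's own argument. Both use the splitting $f_j(y_j)-f(y)=f_j(y_j-y)+(f_j-f)(y)$, control the first term by the boundedness of $(f_j)$ together with norm convergence $y_j\to y$, and send the second term to zero because weak convergence of $f_j$ to $f$ implies weak$^*$ convergence.
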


\begin{proof}
	Let $T \in \mathcal{L_{DP}^{\mathrm{ev}}}(E_1,\dots,E_m;F)$, and let $(x_j^{(i)})_{j=1}^{\infty} \subset E_i$, $i=1,\dots,m$, and $(f_j)_{j=1}^{\infty} \subset F'$ be such that
	\[
	x_j^{(i)} \overset{w}{\longrightarrow} a_i \in E_i
	\quad \text{for all } i=1,\dots,m,
	\quad \text{and} \quad
	f_j \overset{w}{\longrightarrow} f \in F'.
	\]
	Then,
	\begin{align*}
		&\big|f_j\big(T(x_j^{(1)},\dots,x_j^{(m)})\big) - f\big(T(a_1,\dots,a_m)\big)\big| \\
		&\le \big|f_j\big(T(x_j^{(1)},\dots,x_j^{(m)})\big) - f_j\big(T(a_1,\dots,a_m)\big)\big| \\
		&\quad + \big|f_j\big(T(a_1,\dots,a_m)\big) - f\big(T(a_1,\dots,a_m)\big)\big| \\
		&\le \|f_j\| \, \big\|T(x_j^{(1)},\dots,x_j^{(m)}) - T(a_1,\dots,a_m)\big\|
		+ \big|(f_j - f)\big(T(a_1,\dots,a_m)\big)\big|.
	\end{align*}
	
	Since $(f_j)_{j=1}^{\infty}$ is weakly convergent, it is bounded. Moreover, since $T \in \mathcal{L_{DP}^{\mathrm{ev}}}$, we have
	\[
	T(x_j^{(1)},\dots,x_j^{(m)}) \longrightarrow T(a_1,\dots,a_m)
	\]
	in norm. Hence, the first term converges to zero. The second term also converges to zero because $f_j \to f$ in the weak* topology of $F'$.
	
	Therefore,
	\[
	f_j\big(T(x_j^{(1)},\dots,x_j^{(m)})\big)
	\longrightarrow
	f\big(T(a_1,\dots,a_m)\big),
	\]
	which shows that $T \in \mathcal{L_{WDP}^{\mathrm{ev}}}(E_1,\dots,E_m;F)$.
\end{proof}

%It following ideas similar to those in Proposition \ref{DP=WDP}, we can show that.

\begin{theorem}\label{WDP=WDP}
	Let $E_1,\dots,E_m$ and $F$ be Banach spaces. If $F$ is reflexive, then
	\[
	\mathcal{L_{DP}^{\mathrm{ev}}}(E_1,\dots,E_m;F)
	=
	\mathcal{L_{WDP}^{\mathrm{ev}}}(E_1,\dots,E_m;F).
	\]
\end{theorem}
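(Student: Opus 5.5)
The inclusion $\mathcal{L_{DP}^{\mathrm{ev}}}\subset\mathcal{L_{WDP}^{\mathrm{ev}}}$ is Proposition \ref{DPEvsubWDPEv}, so only the reverse inclusion needs proof. I will prove it by contradiction, using reflexivity of $F$ to extract weakly convergent norming functionals.

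Let $T\in\mathcal{L_{WDP}^{\mathrm{ev}}}(E_1,\dots,E_m;F)$ and let $x_j^{(i)}\overset{w}{\to}a_i$ in $E_i$ for $i=1,\dots,m$. Suppose $T(x_j^{(1)},\dots,x_j^{(m)})\not\to T(a_1,\dots,a_m)$ in norm. Then there exist $\varepsilon>0$ and a subsequence $(j_k)$ with
\[
\big\|T(x_{j_k}^{(1)},\dots,x_{j_k}^{(m)})-T(a_1,\dots,a_m)\big\|\ge\varepsilon \quad\text{for all } k.
\]
By Hahn--Banach, for each $k$ choose $f_k\in B_{F'}$ with
\[
f_k\big(T(x_{j_k}^{(1)},\dots,x_{j_k}^{(m)})-T(a_1,\dots,a_m)\big)=\big\|T(x_{j_k}^{(1)},\dots,x_{j_k}^{(m)})-T(a_1,\dots,a_m)\big\|\ge\varepsilon .
\]

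This is where reflexivity is used. Since $F$ is reflexive, so is $F'$. By Eberlein--\v{S}mulian, $B_{F'}$ is weakly sequentially compact. Passing to a further subsequence (relabelled), we get $f_k\overset{w}{\to}f$ for some $f\in F'$. The subsequences $(x_{j_k}^{(i)})_k$ still converge weakly to $a_i$. Applying Definition \ref{WDPML} to these sequences gives
\[
f_k\big(T(x_{j_k}^{(1)},\dots,x_{j_k}^{(m)})\big)\to f\big(T(a_1,\dots,a_m)\big).
\]
Weak convergence of $(f_k)$ gives $f_k(T(a_1,\dots,a_m))\to f(T(a_1,\dots,a_m))$. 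Subtracting, the left-hand side of the norming inequality tends to $0$, which contradicts the bound $\ge\varepsilon$. Hence $T\in\mathcal{L_{DP}^{\mathrm{ev}}}(E_1,\dots,E_m;F)$.

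The only delicate point is the extraction step. One needs a sequence of functionals converging in exactly the mode required by Definition \ref{WDPML}, namely weak convergence in $F'$. Reflexivity of $F$ supplies this through Eberlein--\v{S}mulian applied to $F'$; in reflexive spaces the weak and weak$^*$ topologies on $F'$ coincide anyway. The rest of the argument is routine bookkeeping with subsequences.
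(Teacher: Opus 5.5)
Your proof is correct and follows essentially the same route as the paper's. Both argue by contradiction: take a subsequence bounded away from $T(a_1,\dots,a_m)$, choose Hahn--Banach norming functionals, use reflexivity to extract a weakly convergent subsequence in $F'$, and contradict the definition of weakly Dunford--Pettis at every point. Your use of Eberlein--\v{S}mulian for the extraction is slightly more careful than the paper's appeal to Banach--Alaoglu, which by itself gives only a weak$^*$ convergent subnet, not a subsequence.
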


\begin{proof}
	We already know from Proposition \ref{DPEvsubWDPEv} that
	\[
	\mathcal{L_{DP}^{\text{ev}}}(E_1,\dots, E_m; F) \subset \mathcal{L_{WDP}^{\text{ev}}}(E_1,\dots, E_m; F)
	\]
	for any Banach spaces $E_1,\dots, E_m$ and $F$.
	
	To prove the reverse inclusion, let $T \in \mathcal{L_{WDP}^{\text{ev}}}(E_1,\dots, E_m; F)$. Suppose, by contradiction, that there exist $(a_1,\dots, a_m) \in E_1 \times \cdots \times E_m$ and sequences $(x_j^{(i)})_{j=1}^\infty$ in $E_i$ such that
	\[
	x_j^{(i)} \overset{w}{\longrightarrow} a_i \quad \text{for all } i=1,\dots,m,
	\]
	but
	\[
	T(x_j^{(1)},\dots, x_j^{(m)}) \nrightarrow T(a_1,\dots, a_m).
	\]
	Then, passing to a subsequence if necessary, there exists $\varepsilon > 0$ such that
	\[
	\|T(x_j^{(1)},\dots, x_j^{(m)}) - T(a_1,\dots, a_m)\| \ge \varepsilon
	\]
	for all $j \in \mathbb{N}$.
	
	By the Hahn-Banach Theorem, for each $j$ there exists $f_j \in F'$ with $\|f_j\| = 1$ such that
	\[
	f_j\big(T(x_j^{(1)},\dots, x_j^{(m)}) - T(a_1,\dots, a_m)\big)
	= \|T(x_j^{(1)},\dots, x_j^{(m)}) - T(a_1,\dots, a_m)\|.
	\]
	
	Since $(f_j)$ lies in the unit ball of $F'$, by the Banach--Alaoglu Theorem there exist a subsequence $(f_{j_k})$ and $f \in F'$ such that
	\[
	f_{j_k} \overset{w^*}{\longrightarrow} f.
	\]
	As $F$ is reflexive, the weak* and weak topologies on $F'$ coincide, hence
	\[
	f_{j_k} \overset{w}{\longrightarrow} f.
	\]
	
	Therefore,
	\begin{align*}
		\varepsilon 
		&\le \|T(x_{j_k}^{(1)},\dots, x_{j_k}^{(m)}) - T(a_1,\dots, a_m)\| \\
		&= f_{j_k}\big(T(x_{j_k}^{(1)},\dots, x_{j_k}^{(m)}) - T(a_1,\dots, a_m)\big) \\
		&= f_{j_k}\big(T(x_{j_k}^{(1)},\dots, x_{j_k}^{(m)})\big) - f_{j_k}\big(T(a_1,\dots, a_m)\big).
	\end{align*}
	
	Thus,
	\begin{align*}
		\varepsilon 
		&\le \left| f_{j_k}\big(T(x_{j_k}^{(1)},\dots, x_{j_k}^{(m)})\big) - f\big(T(a_1,\dots, a_m)\big) \right| \\
		&\quad + \left| (f - f_{j_k})\big(T(a_1,\dots, a_m)\big) \right|.
	\end{align*}
	
	Since $T \in \mathcal{L_{WDP}^{\text{ev}}}(E_1,\dots, E_m; F)$ and $f_{j_k} \overset{w}{\longrightarrow} f$, the first term converges to $0$, and the second term also converges to $0$. This is a contradiction.
	
	Therefore,
	\[
	\mathcal{L_{WDP}^{\text{ev}}}(E_1,\dots, E_m; F) \subset \mathcal{L_{DP}^{\text{ev}}}(E_1,\dots, E_m; F),
	\]
	and the proof is complete.
\end{proof}

Another immediate consequence of Theorem \ref{DPEVS} is the following result.

\begin{proposition}
	Let $E_1,\dots, E_m$ and $F$ be Banach spaces. If $E_i$ has the Schur property for all $i=1,\dots, m$, then every continuous multilinear mapping from $E_1\times\cdots\times E_m$ into $F$ is weakly Dunford--Pettis at every point. In particular,
	\[
	\mathcal{L_{WDP}^{\text{ev}}}(E_1,\dots, E_m; F) = \mathcal{L}(E_1,\dots, E_m; F).
	\]
\end{proposition}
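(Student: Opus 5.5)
The plan is to chain two earlier results. By Theorem \ref{DPEVS}, when every $E_i$ has the Schur property, every $T \in \mathcal{L}(E_1,\dots,E_m;F)$ is Dunford--Pettis at every point. By Proposition \ref{DPEvsubWDPEv}, $\mathcal{L_{DP}^{\mathrm{ev}}}(E_1,\dots,E_m;F) \subset \mathcal{L_{WDP}^{\mathrm{ev}}}(E_1,\dots,E_m;F)$ for arbitrary Banach spaces. Together these give
\[
\mathcal{L}(E_1,\dots,E_m;F) = \mathcal{L_{DP}^{\mathrm{ev}}}(E_1,\dots,E_m;F) \subset \mathcal{L_{WDP}^{\mathrm{ev}}}(E_1,\dots,E_m;F) \subset \mathcal{L}(E_1,\dots,E_m;F).
\]
The last inclusion holds trivially by definition. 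This yields the stated equality.

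If a self-contained argument is preferred, I would proceed directly. Take $x_j^{(i)} \overset{w}{\to} a_i$ and $f_j \overset{w}{\to} f$ in $F'$. The Schur property upgrades each $x_j^{(i)} \to a_i$ to norm convergence. Then the multilinear telescoping estimate from the proof of Theorem \ref{DPEVS} gives $\|T(x_j^{(1)},\dots,x_j^{(m)}) - T(a_1,\dots,a_m)\| \to 0$.

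Next, I would split
\[
f_j(T(x_j^{(1)},\dots,x_j^{(m)})) - f(T(a_1,\dots,a_m))
\]
exactly as in Proposition \ref{DPEvsubWDPEv}. The first piece is bounded by $\sup_j\|f_j\|$ times the norm difference above. Here the uniform boundedness principle guarantees $\sup_j\|f_j\|<\infty$, since a weakly convergent sequence is bounded. The second piece is $(f_j-f)(T(a_1,\dots,a_m))$, which tends to $0$ by weak (hence weak$^*$) convergence.

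There is no real obstacle. The only point requiring care is the boundedness of $(f_j)$, which is standard.
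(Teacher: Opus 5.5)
Your proposal is correct and follows the paper's proof: the paper likewise combines Theorem~\ref{DPEVS} with the inclusions $\mathcal{L_{DP}^{\mathrm{ev}}} \subset \mathcal{L_{WDP}^{\mathrm{ev}}} \subset \mathcal{L}$ from Proposition~\ref{DPEvsubWDPEv}. Your optional self-contained version simply unwinds these two results, and it is also sound.
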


\begin{proof}
	By Proposition \ref{DPEvsubWDPEv}, we have
	\[
	\mathcal{L_{DP}^{\text{ev}}}(E_1,\dots, E_m; F)
	\subset 
	\mathcal{L_{WDP}^{\text{ev}}}(E_1,\dots, E_m; F)
	\subset 
	\mathcal{L}(E_1,\dots, E_m; F).
	\]
	Since each $E_i$ has the Schur property, Theorem \ref{DPEVS} ensures that
	\[
	\mathcal{L_{DP}^{\text{ev}}}(E_1,\dots, E_m; F)
	=
	\mathcal{L}(E_1,\dots, E_m; F).
	\]
	The conclusion follows immediately.
\end{proof}

\begin{proposition}
	Let $E_1,\dots, E_m$ and $F$ be Banach spaces. If each $E_i$ has the Schur property and the projective tensor product $E_1 \hat{\otimes}_{\pi} \cdots \hat{\otimes}_{\pi} E_m$ also has the Schur property, then
	\[
	\mathcal{DP} \circ \mathcal{L}(E_1,\dots, E_m; F)
	=
	\mathcal{L_{WDP}^{\text{ev}}}(E_1,\dots, E_m; F).
	\]
\end{proposition}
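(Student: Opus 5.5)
The plan is to sandwich $\mathcal{L_{WDP}^{\text{ev}}}(E_1,\dots,E_m;F)$ between two spaces that the hypotheses force to coincide. The previous proposition in this section, which relies only on each $E_i$ having the Schur property, gives
\[
\mathcal{L_{WDP}^{\text{ev}}}(E_1,\dots,E_m;F)=\mathcal{L}(E_1,\dots,E_m;F).
\]
So it suffices to identify $\mathcal{DP}\circ\mathcal{L}(E_1,\dots,E_m;F)$ with the whole space $\mathcal{L}(E_1,\dots,E_m;F)$.

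For that identification, I will invoke the observation quoted from \cite[Section 4]{BT15}: when $E_1\hat{\otimes}_\pi\cdots\hat{\otimes}_\pi E_m$ has the Schur property, $\mathcal{DP}\circ\mathcal{L}(E_1,\dots,E_m;F)=\mathcal{L}(E_1,\dots,E_m;F)$. If a self-contained argument is wanted, take $T\in\mathcal{L}(E_1,\dots,E_m;F)$ and write $T=T_L\circ\sigma$. Here $\sigma(x_1,\dots,x_m)=x_1\otimes\cdots\otimes x_m$ is the canonical $m$-linear map and $T_L$ is the linearization. The Schur property of the tensor product means the identity on $E_1\hat{\otimes}_\pi\cdots\hat{\otimes}_\pi E_m$ is Dunford–Pettis. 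Since $\mathcal{DP}$ is an operator ideal, $T_L=T_L\circ Id$ is Dunford–Pettis, hence $T\in\mathcal{DP}\circ\mathcal{L}$. The reverse inclusion $\mathcal{DP}\circ\mathcal{L}\subset\mathcal{L}$ is trivial.

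Chaining the two equalities gives
\[
\mathcal{DP}\circ\mathcal{L}(E_1,\dots,E_m;F)=\mathcal{L}(E_1,\dots,E_m;F)=\mathcal{L_{WDP}^{\text{ev}}}(E_1,\dots,E_m;F).
\]
Alternatively, one can combine the earlier proposition $\mathcal{DP}\circ\mathcal{L}=\mathcal{L_{DP}^{\text{ev}}}$ (under the same hypotheses) with the equality $\mathcal{L_{DP}^{\text{ev}}}=\mathcal{L}=\mathcal{L_{WDP}^{\text{ev}}}$ from Theorem \ref{DPEVS} and Proposition \ref{DPEvsubWDPEv}.

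There is no real obstacle. The only point needing care is the identification $\mathcal{DP}\circ\mathcal{L}=\mathcal{L}$, which rests on the Schur property of the projective tensor product, not of the factors. That step is exactly where the extra hypothesis is used, and it is supplied by \cite{BT15}.
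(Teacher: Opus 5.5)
Your proposal is correct and follows the paper's proof. Both chain $\mathcal{L_{WDP}^{\text{ev}}}(E_1,\dots,E_m;F)=\mathcal{L}(E_1,\dots,E_m;F)$, which comes from the Schur property of each $E_i$ via the preceding proposition, with $\mathcal{L}(E_1,\dots,E_m;F)=\mathcal{DP}\circ\mathcal{L}(E_1,\dots,E_m;F)$, which comes from the Schur property of the projective tensor product as observed by Botelho--Torres; your factorization $T=T_L\circ\sigma$ simply spells out the step the paper cites.
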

\begin{proof}
	Since $E_i$ has the Schur property for all $i=1,\dots, m$ and $E_1 \hat{\otimes}_{\pi} \cdots \hat{\otimes}_{\pi} E_m$ has also the Schur property, then
	\begin{equation*}
		\mathcal{L_{WDP}^{\text{ev}}}(E_1,\dots, E_m; F) = \mathcal{L}(E_1,\dots, E_m; F) = \mathcal{DP} \circ \mathcal{L}(E_1,\dots, E_m; F).
	\end{equation*}
\end{proof}

\section{Weakly$^{*}$ Dunford-Pettis Multilinear Operators}

We begin by introducing the linear case. As in the previous section, this approach will be useful to show that the class of weakly$^{*}$ Dunford--Pettis multilinear operators does not satisfy the Coherence and Compatibility condition $(CH1)$ (see \cite{PR14}) and also fails to be a hyper-ideal (see \cite{BT15}).

\begin{definition}
	Let $E$ and $F$ be Banach spaces and $T \in \mathcal{L}(E; F)$. We say that $T$ is \emph{weakly$^{*}$ Dunford--Pettis} if for every sequence $(x_j)_{j=1}^\infty \subset E$ and every sequence $(f_j)_{j=1}^\infty \subset F'$ such that
	\[
	x_j \overset{w}{\longrightarrow} 0
	\quad \text{and} \quad
	f_j \overset{w^{*}}{\longrightarrow} 0,
	\]
	we have
	\[
	f_j\big(T(x_j)\big) \longrightarrow 0.
	\]
\end{definition}

We denote by $\mathcal{WDP}^{*}(E; F)$ the class of all weakly$^{*}$ Dunford-Pettis operators from $E$ into $F$. It is straightforward to verify that $(\mathcal{WDP}^{*}, \|\cdot\|)$ is a Banach operator ideal.

\begin{definition}
	Let $E_1,\dots, E_m$ and $F$ be Banach spaces and $T \in \mathcal{L}(E_1,\dots, E_m; F)$. We say that $T$ is a \emph{weakly$^{*}$ Dunford-Pettis $m$-linear operator} if for every sequence $(x_j^{(i)})_{j=1}^{\infty} \subset E_i$, $i=1,\dots, m$, and every sequence $(f_j)_{j=1}^{\infty} \subset F'$ such that
	\[
	x_j^{(i)} \overset{w}{\longrightarrow} 0 \quad \text{for all } i=1,\dots, m,
	\quad \text{and} \quad
	f_j \overset{w^{*}}{\longrightarrow} 0,
	\]
	we have
	\[
	f_j\big(T(x_j^{(1)},\dots, x_j^{(m)})\big) \longrightarrow 0.
	\]
\end{definition}

\begin{definition}
	Let $m \in \mathbb{N}$, and let $E$ and $F$ be Banach spaces. A polynomial $P \in \mathcal{P}(^mE; F)$ is said to be \emph{weakly* Dunford-Pettis} if for every sequences $(x_j)_{j=1}^{\infty} \subset E$ and $(f_j)_{j=1}^{\infty} \subset F'$ such that
	\begin{equation*}
		x_j \overset{w}{\longrightarrow} 0 \quad \text{and} \quad f_j \overset{w^*}{\longrightarrow} 0,
	\end{equation*}
	we have
	\begin{equation*}
		f_j(P(x_j)) \longrightarrow 0.
	\end{equation*}
\end{definition}

We denote by $\mathcal{L_{WDP^{*}}}$ and $\mathcal{P_{WDP^{*}}}$ the classes of weakly* Dunford-Pettis multilinear operators and homogeneous polynomials, respectively.

\begin{proposition}
	Let $E_1,\dots, E_m$ and $F$ be Banach spaces and $T \in \mathcal{L}(E_1,\dots, E_m; F)$. Then, $T \in \mathcal{L_{WDP^*}^{\text{ev}}}(E_1,\dots, E_m; F)$ if, and only if, for every $(a_1,\dots, a_m) \in E_1\times\cdots\times E_m$, every family of weakly null sequences $(x_j^{(i)})_{j=1}^{\infty} \subset E_i$, $i=1,\dots,m$, and every sequence $(f_j)_{j=1}^{\infty} \subset F'$ with $f_j \overset{w^*}{\longrightarrow} f$, we have
	\begin{equation*}
		f_j\big(T(x_j^{(1)} + a_1,\dots, x_j^{(m)} + a_m)\big)
		\longrightarrow
		f\big(T(a_1,\dots, a_m)\big).
	\end{equation*}
\end{proposition}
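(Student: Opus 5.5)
The definition of $\mathcal{L_{WDP^*}^{\text{ev}}}$ is not written out explicitly before this statement. I take it to be the analogue of Definition \ref{WDPML}, with weak convergence in $F'$ replaced by weak$^*$ convergence. So $T$ is weakly$^*$ Dunford--Pettis at $(a_1,\dots,a_m)$ if, whenever $x_j^{(i)} \overset{w}{\to} a_i$ for $i=1,\dots,m$ and $f_j \overset{w^*}{\to} f$ in $F'$, we have
\[
f_j\big(T(x_j^{(1)},\dots,x_j^{(m)})\big) \to f\big(T(a_1,\dots,a_m)\big).
\]
The class $\mathcal{L_{WDP^*}^{\text{ev}}}$ consists of the operators with this property at every point. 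With this reading, the proof is the same translation argument used in the two earlier characterizations, for $\mathcal{L_{DP}^{\mathrm{ev}}}$ and $\mathcal{L_{WDP}^{\mathrm{ev}}}$. The only change is that the dual sequence now converges weak$^*$, and that sequence is left untouched throughout.

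For the forward implication, assume $T \in \mathcal{L_{WDP^*}^{\text{ev}}}(E_1,\dots,E_m;F)$. Fix $(a_1,\dots,a_m)$, weakly null sequences $(x_j^{(i)})$, and $(f_j)\subset F'$ with $f_j \overset{w^*}{\to} f$. Since translation by a fixed vector preserves weak convergence, $x_j^{(i)}+a_i \overset{w}{\to} a_i$ for each $i$. Applying the definition at the point $(a_1,\dots,a_m)$ to the sequences $(x_j^{(i)}+a_i)$ and $(f_j)$ gives
\[
f_j\big(T(x_j^{(1)}+a_1,\dots,x_j^{(m)}+a_m)\big) \to f\big(T(a_1,\dots,a_m)\big).
\]

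For the converse, assume the stated condition. Let $x_j^{(i)} \overset{w}{\to} a_i$ and $f_j \overset{w^*}{\to} f$, and set $y_j^{(i)} = x_j^{(i)} - a_i$, so each $(y_j^{(i)})$ is weakly null. Then
\[
f_j\big(T(x_j^{(1)},\dots,x_j^{(m)})\big) = f_j\big(T(y_j^{(1)}+a_1,\dots,y_j^{(m)}+a_m)\big),
\]
and by hypothesis the right-hand side converges to $f(T(a_1,\dots,a_m))$. Since the point $(a_1,\dots,a_m)$ was arbitrary, $T$ is weakly$^*$ Dunford--Pettis at every point.

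There is no genuine obstacle here. The only step needing care is to fix the intended definition of the "ev" class explicitly, with weak$^*$ convergence to an arbitrary $f$ rather than to $0$. I would state that definition at the start of the proof, after which both implications are the routine substitutions above. No boundedness or Banach--Alaoglu argument is needed, because no estimate on $f_j$ is ever used.
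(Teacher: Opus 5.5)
Your proof is correct and follows the paper's own argument: in the forward direction you translate by $a_i$, and in the converse you set $y_j^{(i)} = x_j^{(i)} - a_i$, leaving the weak$^*$ convergent sequence $(f_j)$ untouched throughout. Your reading of the definition also matches Definition \ref{WDP*ML}, which the paper states only after this proposition.
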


\begin{proof}
	Suppose first that $T \in \mathcal{L_{WDP^*}^{\text{ev}}}(E_1,\dots, E_m; F)$.
	
	Let $(a_1,\dots,a_m)\in E_1\times\cdots\times E_m$, let $(x_j^{(i)})$ be weakly null sequences in $E_i$, and let $(f_j) \subset F'$ be such that $f_j \overset{w^*}{\longrightarrow} f$. Then
	\[
	x_j^{(i)} + a_i \overset{w}{\longrightarrow} a_i
	\]
	for all $i=1,\dots,m$. Since $T$ is weakly* Dunford-Pettis at every point, it follows that
	\[
	f_j\big(T(x_j^{(1)} + a_1,\dots, x_j^{(m)} + a_m)\big)
	\longrightarrow
	f\big(T(a_1,\dots, a_m)\big).
	\]
	
	Conversely, suppose the condition holds. Let $(x_j^{(i)}) \subset E_i$ and $(f_j) \subset F'$ be such that
	\[
	x_j^{(i)} \overset{w}{\longrightarrow} a_i
	\quad \text{and} \quad
	f_j \overset{w^*}{\longrightarrow} f.
	\]
	Define $y_j^{(i)} = x_j^{(i)} - a_i$, so that $y_j^{(i)} \overset{w}{\longrightarrow} 0$. Then
	\[
	f_j\big(T(x_j^{(1)},\dots,x_j^{(m)})\big)
	=
	f_j\big(T(y_j^{(1)} + a_1,\dots,y_j^{(m)} + a_m)\big).
	\]
	By hypothesis,
	\[
	f_j\big(T(y_j^{(1)} + a_1,\dots,y_j^{(m)} + a_m)\big)
	\longrightarrow
	f\big(T(a_1,\dots,a_m)\big).
	\]
	Therefore, $T \in \mathcal{L_{WDP^*}^{\text{ev}}}(E_1,\dots,E_m;F)$.
\end{proof}

%This characterization shows that, as in the Dunford-Pettis and weakly Dunford-Pettis cases, weakly* Dunford-Pettis multilinear operators at every point are completely determined by their behavior on weakly null perturbations, with the dual convergence replaced by the weak* topology.

It is straightforward to verify that the class $\left(\mathcal{L_{WDP^{*}}}, \|\cdot\|\right)$, endowed with the usual norm, is a Banach multi-ideal. Moreover, following the same ideas as in Proposition \ref{CHSCHCO}, we have
\begin{equation*}
	\mathcal{P_{WDP^{*}}}(^mE; F) = \mathcal{P}_{\mathcal{L_{WDP^{*}}}}(^mE; F),
\end{equation*}
which ensures that $\left(\mathcal{P_{WDP^{*}}}, \|\cdot\|\right)$ is an ideal of homogeneous polynomials.

We now examine the relationship between this class and the ideals of Dunford-Pettis and weakly Dunford-Pettis multilinear operators.

Since every weakly* convergent sequence is bounded, let $T \in \mathcal{L_{DP}}(E_1,\dots, E_m; F)$. For sequences $(x_j^{(i)})_{j=1}^{\infty} \subset E_i$, $i=1,\dots, m$, with $x_j^{(i)} \overset{w}{\longrightarrow} 0$ and $(f_j)_{j=1}^{\infty} \subset F'$ with $f_j \overset{w^{*}}{\longrightarrow} 0$, we have
\begin{equation*}
	|f_j(T(x_j^{(1)},\dots, x_j^{(m)}))| \le \|f_j\| \, \|T(x_j^{(1)},\dots, x_j^{(m)})\|.
\end{equation*}
Since $(f_j)$ is bounded and $T$ is Dunford-Pettis, it follows that
\begin{equation*}
	f_j(T(x_j^{(1)},\dots, x_j^{(m)})) \longrightarrow 0.
\end{equation*}
Thus, $T \in \mathcal{L_{WDP^{*}}}(E_1,\dots, E_m; F)$, and therefore
\begin{equation*}
	\mathcal{L_{DP}} \subset \mathcal{L_{WDP^{*}}}.
\end{equation*}

Now, let $T \in \mathcal{L_{WDP^{*}}}(E_1,\dots, E_m; F)$ and suppose that $x_j^{(i)} \overset{w}{\longrightarrow} 0$ in $E_i$ for all $i=1,\dots, m$, and $f_j \overset{w}{\longrightarrow} 0$ in $F'$. Since weak convergence implies weak* convergence, we have $f_j \overset{w^{*}}{\longrightarrow} 0$. Hence,
\begin{equation*}
	|f_j(T(x_j^{(1)},\dots, x_j^{(m)}))| \longrightarrow 0,
\end{equation*}
which shows that $T \in \mathcal{L_{WDP}}(E_1,\dots, E_m; F)$. Therefore,
\begin{equation*}
	\mathcal{L_{WDP^{*}}} \subset \mathcal{L_{WDP}}.
\end{equation*}

\begin{definition}\label{WDP*ML}
	Let $m \in \mathbb{N}$, and let $E_1,\dots, E_m$ and $F$ be Banach spaces. A mapping $T \in \mathcal{L}(E_1,\dots, E_m; F)$ is said to be a \emph{weakly* Dunford-Pettis $m$-linear operator at the point} $(a_1,\dots, a_m) \in E_1 \times \cdots \times E_m$ if for every sequences $(x_j^{(i)})_{j=1}^{\infty} \subset E_i$, $i=1,\dots, m$, and $(f_j)_{j=1}^{\infty} \subset F'$ satisfying
	\begin{equation*}
		x_j^{(i)} \overset{w}{\longrightarrow} a_i \quad \text{and} \quad f_j \overset{w^*}{\longrightarrow} f \in F',
	\end{equation*}
	we have
	\begin{equation*}
		f_j\big(T(x_j^{(1)},\dots, x_j^{(m)})\big) \longrightarrow f\big(T(a_1,\dots, a_m)\big)
	\end{equation*}
	as $j \to \infty$.
\end{definition}

When $T$ is a weakly* Dunford-Pettis multilinear operator at every point of $E_1 \times \cdots \times E_m$, we say that $T$ is \emph{weakly* Dunford-Pettis at every point} and we denote the class of all such operators by $\mathcal{L_{WDP^{*}}^{\text{ev}}}$.

Analogously, we define:

\begin{definition}\label{WDP*PH}
	Let $m \in \mathbb{N}$, and let $E$ and $F$ be Banach spaces. A polynomial $P \in \mathcal{P}(^mE; F)$ is said to be a \emph{weakly* Dunford-Pettis $m$-homogeneous polynomial at the point} $a \in E$ if for every sequences $(x_j)_{j=1}^{\infty} \subset E$ and $(f_j)_{j=1}^{\infty} \subset F'$ satisfying
	\begin{equation*}
		x_j \overset{w}{\longrightarrow} a \quad \text{and} \quad f_j \overset{w^*}{\longrightarrow} f \in F',
	\end{equation*}
	we have
	\begin{equation*}
		f_j(P(x_j)) \longrightarrow f(P(a))
	\end{equation*}
	as $j \to \infty$.
\end{definition}

When $P$ is a weakly* Dunford-Pettis homogeneous polynomial at every point of $E$, we say that $P$ is \emph{weakly* Dunford-Pettis at every point}, and we denote the class of all such polynomials by $\mathcal{P_{WDP^{*}}^{\text{ev}}}$.

It is important to note that, since $\mathcal{L_{WDP^{*}}} \subset \mathcal{L_{WDP}}$, the class $\mathcal{L_{WDP^{*}}}$ inherits several properties from $\mathcal{L_{WDP}}$ established in the previous section. However, it is worth emphasizing that, using the same operator as in Example \ref{WDP(CH1)}, one can readily verify that the class $\mathcal{L_{WDP^{*}}}$ does not satisfy condition $(CH1)$ of \cite{PR14}.

Following the same ideas as in the previous sections, we can obtain the following result.

\begin{theorem}
	\begin{itemize}
		\item[(a)] The class $(\mathcal{L_{WDP^{*}}^{\text{ev}}}, \|\cdot\|)$ is a symmetric Banach multi-ideal.
		
		\item[(b)] $\mathcal{L_{WDP^{*}}^{\text{(CH1)}}} = \mathcal{L_{WDP^{*}}^{\text{ev}}}$.
		
		\item[(c)] The class $(\mathcal{P_{WDP^{*}}^{\text{ev}}}, \|\cdot\|)$ is a Banach ideal of homogeneous polynomials.
		
		\item[(d)] The sequence $(\mathcal{L_{WDP^{*}}^{\text{ev,n}}}, \mathcal{P_{WDP^{*}}^{\text{ev,n}}})_{n=1}^{\infty}$ is coherent and compatible with $\mathcal{WDP^{*}}$.
		
		\item[(e)] Let $m,n \in \mathbb{N}$, $E_1,\dots, E_{m+n}$ and $F$ be Banach spaces. If $T \in \mathcal{L_{WDP^{*}}}(E_1,\dots, E_m; F)$ and $Q \in \mathcal{L}(E_{m+1},\dots, E_{m+n})$, then
		\[
		QT \in \mathcal{L_{WDP^{*}}}(E_1,\dots, E_{m+n}; F)
		\]
		and $\|QT\| \le \|Q\|\,\|T\|$.
	\end{itemize}
\end{theorem}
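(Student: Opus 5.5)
We treat the five items one at a time, reusing the arguments already given for $\mathcal{L_{DP}^{\mathrm{ev}}}$ and $\mathcal{L_{WDP}}$ and replacing weak convergence in $F'$ by weak$^*$ convergence throughout. The only extra facts we need about $F'$ are two. First, a weak$^*$ convergent sequence $(f_j)$ is bounded; this follows from the uniform boundedness principle, since $F$ is complete. Second, if $f_j \overset{w^*}{\to} f$ and $y_j \to y$ in norm, then
\[
|f_j(y_j)-f(y)| \le \|f_j\|\,\|y_j-y\| + |(f_j-f)(y)| \to 0 .
\]
These two facts are all that the estimates below use.

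\textbf{Item (a).} We first show that $\mathcal{L_{WDP^{*}}^{\text{ev}}}(E_1,\dots,E_m;F)$ is a closed subspace of $\mathcal{L}(E_1,\dots,E_m;F)$. Take $T_k \to T$ in norm, $x_j^{(i)} \overset{w}{\to} a_i$ and $f_j \overset{w^*}{\to} f$, and split
\begin{align*}
f_j(T(x_j)) - f(T(a)) &= f_j\big((T-T_k)(x_j)\big) + \big(f_j(T_k(x_j)) - f(T_k(a))\big) \\
&\quad + f\big((T_k-T)(a)\big),
\end{align*}
where $x_j$ and $a$ abbreviate the $m$-tuples. The first and third terms are at most $C\|T-T_k\|$, because $(f_j)$ and $(x_j^{(i)})$ are bounded. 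The middle term tends to $0$ for each fixed $k$, which gives closedness.

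The ideal property comes next. For $u_i \in \mathcal{L}(G_i;E_i)$ and $v \in \mathcal{L}(F;H)$, each $u_i$ maps weakly convergent sequences to weakly convergent ones. Also $g_j \overset{w^*}{\to} g$ in $H'$ implies $v'g_j \overset{w^*}{\to} v'g$ in $F'$, since $v'$ is weak$^*$-to-weak$^*$ continuous. Finite-type operators $\varphi_1\cdots\varphi_m\, y$ belong to the class by the second fact above, because the scalar factor converges and $f_j(y)\to f(y)$. Symmetry follows exactly as in Proposition \ref{DPS}: the condition is invariant under permuting the input sequences, and it is preserved by finite averages.

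\textbf{Item (b).} We argue as in Theorem \ref{DPCH1}. The inclusion $\mathcal{L_{WDP^{*}}^{\text{ev}}} \subset \mathcal{L_{WDP^{*}}^{\text{(CH1)}}}$ holds because fixing coordinates keeps the ``every point'' property: use constant sequences in the fixed coordinates. Evaluating at the origin then gives membership in $\mathcal{L_{WDP^{*}}}$.

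For the converse we expand $T(x_j^{(1)},\dots,x_j^{(m)}) - T(a_1,\dots,a_m)$ as the sum over nonempty $S \subset \{1,\dots,m\}$ of $T_{a_{S^c}}$ evaluated at the weakly null differences $(x_j^{(i)}-a_i)_{i\in S}$. Applying $f_j$, we write
\[
f_j(T(x_j)) - f(T(a)) = \sum_{S \neq \emptyset} f_j\big(T_{a_{S^c}}((x_j^{(i)}-a_i)_{i \in S})\big) + (f_j - f)(T(a)).
\]
The last term tends to $0$ by weak$^*$ convergence. Each term in the sum tends to $0$ because (CH1) puts $T_{a_{S^c}} \in \mathcal{L_{WDP^{*}}}$, and that class is tested against weak$^*$ null functionals. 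When $|S|=m$ the term is just $T$ itself, which lies in $\mathcal{L_{WDP^{*}}}$.

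\textbf{Items (c) and (d).} For (c), we first prove the (CH5)-type equivalence: $P \in \mathcal{P_{WDP^{*}}^{\text{ev}}}$ if and only if $\check P \in \mathcal{L_{WDP^{*}}^{\text{ev}}}$. One direction is the diagonal restriction. For the other we use the polarization formula exactly as in Theorem \ref{DPcoercomp}, applying $f_j$ to each of the finitely many terms $P(\sum_i \epsilon_i x_j^{(i)})$. Each such term converges to $f(P(\sum_i \epsilon_i a_i))$, since $\sum_i \epsilon_i x_j^{(i)} \overset{w}{\to} \sum_i \epsilon_i a_i$. Then $\mathcal{P_{WDP^{*}}^{\text{ev}}} = \mathcal{P}_{\mathcal{L_{WDP^{*}}^{\text{ev}}}}$, and (a) yields a Banach polynomial ideal.

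For (d) we verify (CH1) by the constant-sequence argument used in (b). For the product $\gamma T$ with $\gamma \in E_{m+1}'$, we use the decomposition from Theorem \ref{DPcoercomp}:
\begin{align*}
f_j(\gamma T(x_j)) - f(\gamma T(a)) &= \gamma(x_j^{(m+1)} - a_{m+1})\, f_j(T(x_j^{(1)},\dots,x_j^{(m)})) \\
&\quad + \gamma(a_{m+1})\big(f_j(T(x_j^{(1)},\dots,x_j^{(m)})) - f(T(a_1,\dots,a_m))\big).
\end{align*}
The first term tends to $0$ because the $f_j(T(\cdot))$ factor is bounded and $\gamma(x_j^{(m+1)}-a_{m+1}) \to 0$. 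The second tends to $0$ because $T$ is in the class. The polynomial analogues follow in the same way. Together with symmetry from (a), \cite[Remark 2.3(3) and Proposition 2.4]{RST22} then give coherence and compatibility.

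\textbf{Item (e).} For $QT$ we have
\[
|f_j(QT(x_j))| = |Q(x_j^{(m+1)},\dots,x_j^{(m+n)})|\,|f_j(T(x_j^{(1)},\dots,x_j^{(m)}))|.
\]
The first factor is bounded by $\|Q\|$ times the bounded norms of the inputs, and the second tends to $0$. The estimate $\|QT\| \le \|Q\|\,\|T\|$ is immediate.

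\textbf{Main obstacle.} The only genuinely delicate point is the boundedness of weak$^*$ convergent sequences in $F'$, which we obtain from Banach–Steinhaus using the completeness of $F$. The second potential sticking point is the ideal property under composition with the outer operator $v$; this is handled by the weak$^*$ continuity of the adjoint $v'$.
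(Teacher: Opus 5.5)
Items (a), (c), (d) and (e) go through as you wrote them, and they follow the route the paper intends; the paper itself only says these results follow ``the same ideas as in the previous sections''. For the compatibility part of (d) you should add one fact: on linear maps, $\mathcal{L_{WDP^{*}}^{\text{ev}}}(E;F)=\mathcal{WDP}^{*}(E;F)$. This follows from the splitting $f_j=(f_j-f)+f$ used below, together with the weak-to-weak continuity of bounded linear maps.

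The genuine gap is the converse inclusion in (b). In your identity
\[
f_j(T(x_j)) - f(T(a)) = \sum_{S \neq \emptyset} f_j\big(T_{a_{S^c}}((x_j^{(i)}-a_i)_{i\in S})\big) + (f_j - f)(T(a))
\]
the functionals satisfy $f_j\overset{w^*}{\to}f$, not $f_j\overset{w^*}{\to}0$. So the fact that $T_{a_{S^c}}\in\mathcal{L_{WDP^{*}}}$ does not apply to these terms. Split $f_j=(f_j-f)+f$. The parts with $f_j-f$ do tend to $0$. For $|S|=1$ the remaining part $f\big(T_{a_{S^c}}(x_j^{(i)}-a_i)\big)$ also tends to $0$, since $T_{a_{S^c}}$ is then linear. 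For $|S|\ge 2$, however, you are left with the scalar multilinear form $f\circ T_{a_{S^c}}$ evaluated at weakly null sequences. Such a form need not be weakly sequentially continuous at the origin, and (CH1) says nothing about this. The analogy with Theorem \ref{DPCH1} breaks exactly here: there everything is measured in norm, and no non-null functional ever appears.

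This cannot be repaired, because (b) is false as stated. Let $T\colon\ell_2\times\ell_2\to\mathbb{K}$, $T(x,y)=\sum_k x_ky_k$.
\begin{itemize}
\item[(i)] Since $F'=\mathbb{K}$, a weak$^*$ null sequence $(f_j)$ is just a null scalar sequence. Hence $|f_j\,T(x_j,y_j)|\le|f_j|\,\|x_j\|\,\|y_j\|\to0$, and $T\in\mathcal{L_{WDP^{*}}}$.
\item[(ii)] Each partial evaluation $y\mapsto\sum_k a_ky_k$ is a bounded linear functional, hence lies in $\mathcal{WDP}^{*}(\ell_2;\mathbb{K})$. So $T\in\mathcal{L_{WDP^{*}}^{\text{(CH1)}}}$.
\item[(iii)] Take $x_j=y_j=e_j\overset{w}{\to}0$ and the constant sequence $f_j=f=1$. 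Then $f_j(T(e_j,e_j))=1\nrightarrow 0=f(T(0,0))$, so $T\notin\mathcal{L_{WDP^{*}}^{\text{ev}}}$.
\end{itemize}

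More generally, taking $f_j\equiv f$ in the definition shows that every member of $\mathcal{L_{WDP^{*}}^{\text{ev}}}$ is weak-to-weak sequentially continuous. Your splitting shows this is precisely the missing ingredient. Indeed, $\mathcal{L_{WDP^{*}}^{\text{ev}}}$ consists exactly of those $T\in\mathcal{L_{WDP^{*}}^{\text{(CH1)}}}$ for which $f\circ T$ is weakly sequentially continuous for every $f\in F'$. The same example also refutes Proposition \ref{WDPCH1}.
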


Since weakly and weakly* convergent sequences are, in particular, bounded, and weak convergence implies weak* convergence, we have
\begin{equation*}
	\mathcal{L_{DP}^{\text{ev}}} \subset \mathcal{L_{WDP^{*}}^{\text{ev}}} \subset \mathcal{L_{WDP}^{\text{ev}}}.
\end{equation*}
Moreover, it follows from Theorem \ref{WDP=WDP} that, if $F$ is reflexive, then
\begin{equation*}
	\mathcal{L_{DP}^{\text{ev}}}(E_1,\dots, E_m; F)
	= \mathcal{L_{WDP^{*}}^{\text{ev}}}(E_1,\dots, E_m; F)
	= \mathcal{L_{WDP}^{\text{ev}}}(E_1,\dots, E_m; F).
\end{equation*}

Following the same arguments as in the previous sections, we obtain:

\begin{proposition}
	Let $E_1,\dots, E_m$ and $F$ be Banach spaces. If $E_i$ has the Schur property for all $i=1,\dots, m$, then every continuous multilinear operator from $E_1 \times \cdots \times E_m$ into $F$ is weakly* Dunford-Pettis at every point. In particular,
	\begin{equation*}
		\mathcal{L_{WDP^{*}}^{\text{ev}}}(E_1,\dots, E_m; F) = \mathcal{L}(E_1,\dots, E_m; F).
	\end{equation*}
\end{proposition}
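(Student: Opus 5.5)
The plan is to reduce everything to Theorem \ref{DPEVS} combined with the inclusion $\mathcal{L_{DP}^{\text{ev}}} \subset \mathcal{L_{WDP^{*}}^{\text{ev}}}$ recorded just before the statement. This mirrors the proof of the analogous statement for $\mathcal{L_{WDP}^{\text{ev}}}$. By Theorem \ref{DPEVS}, if each $E_i$ has the Schur property, then every $T \in \mathcal{L}(E_1,\dots,E_m;F)$ lies in $\mathcal{L_{DP}^{\text{ev}}}(E_1,\dots,E_m;F)$. Together with the chain
\[
\mathcal{L_{DP}^{\text{ev}}}(E_1,\dots,E_m;F) \subset \mathcal{L_{WDP^{*}}^{\text{ev}}}(E_1,\dots,E_m;F) \subset \mathcal{L}(E_1,\dots,E_m;F),
\]
this forces all three classes to coincide, which is the claim.

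Since the inclusion $\mathcal{L_{DP}^{\text{ev}}} \subset \mathcal{L_{WDP^{*}}^{\text{ev}}}$ was only asserted in passing, I will verify it directly, following the estimate in Proposition \ref{DPEvsubWDPEv} with weak convergence of $(f_j)$ replaced by weak* convergence. Take $x_j^{(i)} \overset{w}{\to} a_i$ and $f_j \overset{w^*}{\to} f$. Then
\[
\big|f_j(T(x_j^{(1)},\dots,x_j^{(m)})) - f(T(a_1,\dots,a_m))\big| \le \|f_j\|\,\|T(x_j^{(1)},\dots,x_j^{(m)}) - T(a_1,\dots,a_m)\| + |(f_j-f)(T(a_1,\dots,a_m))|.
\]
The first term tends to $0$ for two reasons: weak* convergent sequences in the dual of a Banach space are bounded by the uniform boundedness principle, and $T$ is Dunford--Pettis at every point. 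The second term tends to $0$ by the very definition of weak* convergence, evaluated at the fixed vector $T(a_1,\dots,a_m)\in F$.

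The only genuine point of care is the boundedness of $(f_j)$. Unlike in Proposition \ref{DPEvsubWDPEv}, it comes from Banach--Steinhaus applied to $F'$ as a space of functionals on the Banach space $F$, not from weak convergence, and I will say so explicitly. Otherwise the argument is routine. Alternatively, one could argue directly from the Schur property: write $x_j^{(i)} - a_i \to 0$ in norm and expand $T(x_j^{(1)},\dots,x_j^{(m)}) - T(a_1,\dots,a_m)$ as in the proof of Theorem \ref{DPEVS}. I would mention this only as a remark, since citing Theorem \ref{DPEVS} is cleaner.
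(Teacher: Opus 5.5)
Your proposal is correct and follows essentially the paper's own argument: Theorem \ref{DPEVS} combined with the chain $\mathcal{L_{DP}^{\text{ev}}} \subset \mathcal{L_{WDP^{*}}^{\text{ev}}} \subset \mathcal{L}$. The paper also inserts $\mathcal{L_{WDP}^{\text{ev}}}$ into the chain, which is not needed. Your direct check of the first inclusion, with the boundedness of $(f_j)$ taken from the uniform boundedness principle, is valid and fills in a step the paper only asserts in passing.
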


\begin{proof}
	For any Banach spaces $E_1,\dots, E_m$ and $F$, we have
	\begin{equation*}
		\mathcal{L_{DP}^{\text{ev}}} \subset \mathcal{L_{WDP^{*}}^{\text{ev}}} \subset \mathcal{L_{WDP}^{\text{ev}}} \subset \mathcal{L}.
	\end{equation*}
	If each $E_i$ has the Schur property, then by Theorem \ref{DPEVS} we obtain
	\begin{equation*}
		\mathcal{L_{DP}^{\text{ev}}}(E_1,\dots, E_m; F) = \mathcal{L}(E_1,\dots, E_m; F),
	\end{equation*}
	and the conclusion follows.
\end{proof}

As a consequence, we obtain:

\begin{proposition}
	Let $E_1,\dots, E_m$ and $F$ be Banach spaces. If each $E_i$ has the Schur property and $E_1 \hat{\otimes}_{\pi} \cdots \hat{\otimes}_{\pi} E_m$ also has the Schur property, then
	\begin{equation*}
		\mathcal{DP} \circ \mathcal{L}(E_1,\dots, E_m; F)
		= \mathcal{L_{WDP^{*}}^{\text{ev}}}(E_1,\dots, E_m; F).
	\end{equation*}
\end{proposition}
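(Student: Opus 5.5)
The plan is to chain two equalities, exactly as in the analogous propositions for $\mathcal{L_{DP}^{\text{ev}}}$ and $\mathcal{L_{WDP}^{\text{ev}}}$. The conclusion will follow by transitivity:
\[
\mathcal{L_{WDP^{*}}^{\text{ev}}}(E_1,\dots, E_m; F) = \mathcal{L}(E_1,\dots, E_m; F) = \mathcal{DP} \circ \mathcal{L}(E_1,\dots, E_m; F).
\]

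The first equality comes from the hypothesis that each $E_i$ has the Schur property. Under that assumption, the preceding proposition shows that every continuous multilinear operator is weakly* Dunford--Pettis at every point. That proposition was itself obtained from Theorem \ref{DPEVS} together with the chain of inclusions $\mathcal{L_{DP}^{\text{ev}}} \subset \mathcal{L_{WDP^{*}}^{\text{ev}}} \subset \mathcal{L}$.

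The second equality comes from the hypothesis that $E_1 \hat{\otimes}_{\pi} \cdots \hat{\otimes}_{\pi} E_m$ has the Schur property. By the observation of \cite[Section 4]{BT15} recalled earlier, this gives $\mathcal{DP} \circ \mathcal{L}(E_1,\dots,E_m;F) = \mathcal{L}(E_1,\dots,E_m;F)$. The mechanism is the following. For $T \in \mathcal{L}(E_1,\dots,E_m;F)$, write $T = T_L \circ \sigma$, where $\sigma$ is the canonical $m$-linear map into the projective tensor product. Since that tensor product has the Schur property, its identity is Dunford--Pettis, so $T_L = T_L \circ Id$ is Dunford--Pettis, and hence $T = T_L \circ \sigma \in \mathcal{DP} \circ \mathcal{L}$.

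No step here is a real obstacle, since everything reduces to results already available. The only point requiring care is that the identification $\mathcal{DP} \circ \mathcal{L} = \mathcal{L}$ relies on the Schur property of the projective tensor product, and not only on that of the individual factors. This is why both hypotheses appear in the statement.
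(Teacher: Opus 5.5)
Your proposal is correct and follows essentially the same route as the paper. The Schur property of the factors gives $\mathcal{L_{WDP^{*}}^{\text{ev}}} = \mathcal{L}$ (via Theorem \ref{DPEVS} and the inclusion chain), and the Schur property of $E_1 \hat{\otimes}_{\pi} \cdots \hat{\otimes}_{\pi} E_m$ gives $\mathcal{L} = \mathcal{DP}\circ\mathcal{L}$; the paper simply writes the longer chain through $\mathcal{L_{DP}^{\text{ev}}}$ and $\mathcal{L_{WDP}^{\text{ev}}}$, and your factorization $T = T_L\circ\sigma$ correctly justifies the observation cited from \cite{BT15}.
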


\begin{proof}
	Under the assumptions, we have
	\begin{align*}
		\mathcal{L_{DP}^{\text{ev}}}(E_1,\dots, E_m; F)
		&= \mathcal{L_{WDP^{*}}^{\text{ev}}}(E_1,\dots, E_m; F)\\
		&= \mathcal{L_{WDP}^{\text{ev}}}(E_1,\dots, E_m; F)\\
		&= \mathcal{L}(E_1,\dots, E_m; F)\\
		&= \mathcal{DP} \circ \mathcal{L}(E_1,\dots, E_m; F),
	\end{align*}
	which completes the proof.
\end{proof}

\end{document}